\crefname{hypothesis}{Hypothesis}{Hypotheses}
\title{A Block Bidiagonalization Method for Fixed-Accuracy Low-Rank Matrix Approximation\thanks{
This research was supported in part by the National Science Foundation through grant DMS-1745654.}}
\author{Eric Hallman\thanks{North Carolina State University
  (\email{erhallma@ncsu.edu}, \url{https://erhallma.math.ncsu.edu/}).} }
\definecolor{cerulean}{rgb}{0.16, 0.32, 0.75}
\newcommand{\R}{\mathbb{R}}
\newcommand{\mat}[1]{{\bf #1}}
\newcommand{\ts}{^T}
\newcommand{\eps}{\varepsilon}
\newcommand{\cmm}{C_{\text{mul}}}
\newcommand{\cqr}{C_{\text{qr}}}
\newcommand{\cqrcp}{C_{\text{qrcp}}}
\newcommand{\emach}{\epsilon_{\text{mach}}}
\DeclareMathOperator*{\Span}{Span}
\DeclareMathOperator*{\trace}{tr}
\begin{document}

\maketitle

\begin{abstract}
	We present randUBV, a randomized algorithm for matrix sketching based on the block Lanzcos bidiagonalization process. Given a matrix $\mat{A}$, it produces a low-rank approximation of the form $\mat{UBV}\ts$, where $\mat{U}$ and $\mat{V}$ have orthonormal columns in exact arithmetic and $\mat{B}$ is block bidiagonal. In finite precision, the columns of both $\mat{U}$ and $\mat{V}$ will be close to orthonormal. Our algorithm is closely related to the randQB algorithms of Yu, Gu, and Li (2018) in that the entries of $\mat{B}$ are incrementally generated and the Frobenius norm approximation error may be efficiently estimated. It is therefore suitable for the fixed-accuracy problem, and so is designed to terminate as soon as a user input error tolerance is reached. Numerical experiments suggest that the block Lanczos method is generally competitive with or superior to algorithms that use power iteration, even when $\mat{A}$ has significant clusters of singular values. 
\end{abstract}

\begin{keywords}
  randomized algorithm, low-rank matrix approximation, fixed-accuracy problem, block Lanczos
\end{keywords}

\begin{AMS}
  15A18, 15A23, 65F15, 65F30, 68W20
\end{AMS}

\section{Introduction}
In this paper we consider the problem of finding a quality low-rank approximation $\widetilde{\mat{A}}_r$ to a given matrix $\mat{A}\in \R^{m\times n}$, where we assume that $m\geq n$. In particular we consider the {\it fixed-accuracy} problem, where the desired truncation rank $r$ is not known in advance, but we instead want to find the smallest possible $r$ such that $\|\mat{A}-\widetilde{\mat{A}}_r\|_F < \tau$ for some tolerance $\tau$. 

The optimal approximation can be found by computing and truncating the SVD of $\mat{A}$, but when $\mat{A}$ is large this method may be impractically expensive. It is therefore increasingly common to use randomized techniques to find an approximation to the dominant subspace of $\mat{A}$: that is, to find a matrix $\mat{Q}\in \R^{m\times r}$ with orthonormal columns so that \cite{halko2011random}
\begin{equation}
	\mat{A}\approx \mat{QB}, 
\end{equation}
where $\mat{B}$ is an $r\times n$ matrix satisfying
\begin{equation}
	\mat{B} = \mat{Q}\ts \mat{A}. 
\end{equation}

Two variants on this basic approach are randomized subspace iteration and randomized block Lanczos. Algorithms \ref{alg:randqb} and \ref{alg:rbl} present prototype algorithms for each of these methods for the {\it fixed-rank} problem, where $r$ is specified in advance. 

\begin{algorithm}[H]
    \centering
    \caption{Randomized Subspace Iteration (\texttt{randQB}) \cite[Alg.~4.3]{halko2011random}}\label{alg:randqb}
    \begin{algorithmic}[1]
		\REQUIRE{$\mat{A}\in \R^{m\times n}$, rank $r$, integer $\ell\geq r$,  power parameter $p\geq 0$}
		\ENSURE{$\mat{Q}\in \R^{m\times \ell}$ with orthonormal columns, $\mat{B} \in \R^{\ell\times n}$}
        \STATE{Draw a random standard Gaussian matrix $\mat{\Omega}\in \R^{n\times \ell}$}
		\STATE{Form $\mat{Y} = (\mat{AA}\ts)^p\mat{A\Omega}$}
		\STATE{Compute the QR factorization $\mat{Y} = \mat{QR}$}
		\STATE{$\mat{B} = \mat{Q}\ts\mat{A}$}
    \end{algorithmic}
\end{algorithm}

\begin{algorithm}[H]
    \centering
    \caption{Randomized Block Lanczos \cite[Alg.~1]{yuan2018superlinear}}\label{alg:rbl}
    \begin{algorithmic}[1]
		\REQUIRE{$\mat{A}\in \R^{m\times n}$, block size $b\geq 1$, rank $r$, iterations $q$ such that $(q+1)b\geq r$}
		\ENSURE{$\mat{Q}\in \R^{m\times (q+1)b}$ with orthonormal columns, $\mat{B} \in \R^{(q+1)b\times n}$}
        \STATE{Draw a random standard Gaussian matrix $\mat{\Omega}\in \R^{n\times b}$}
		\STATE{Form $\mat{Y} = [\mat{A\Omega}, (\mat{AA}\ts)\mat{A\Omega}, \ldots,(\mat{AA}\ts)^{q}\mat{A\Omega}]$}
		\STATE{Compute the QR factorization $\mat{Y} = \mat{QR}$}
		\STATE{$\mat{B} = \mat{Q}\ts\mat{A}$}
    \end{algorithmic}
\end{algorithm}

Extensions of these algorithms to the fixed-accuracy problem make use of the fact that the columns of $\mat{Q}$ and rows of $\mat{B}$ can be computed incrementally rather than all at once. The process can then be terminated once a user-specified error threshold has been reached, assuming the error can be efficiently computed or estimated. Algorithms for the fixed-accuracy problem are proposed in \cite{halko2011random,martinsson2016randomized}, and more recently by Yu, Gu, and Li in \cite{yu2018efficient}. One algorithm by the latter authors, \texttt{randQB\_EI}, is currently the foundation for the MATLAB function \texttt{svdsketch} \cite{MATLAB:2020}. 

The algorithms cited above all rely on subspace iteration rather than the block Lanczos method, despite the fact that Krylov subspace methods are ``the classical prescription for obtaining a partial SVD'' \cite{halko2011random}, as with \texttt{svds} in MATLAB. One justification for the focus on subspace iteration is that convergence analysis is more complete. In particular, the block Lanczos method converges slowly when the spectrum of $\mat{A}$ has a cluster larger than the block size $b$, and the convergence analysis becomes more complicated in this situation. In recent years, however, several works have improved the analysis for randomized block Lanczos. Analyzing Algorithm \ref{alg:rbl} for the case $b\geq r$, Musco and Musco \cite{musco2015randomized} derive bounds on the approximation error that do not depend on the gaps between the singular values of $\mat{A}$. Yuan, Gu, and Li \cite{yuan2018superlinear} derive results under the more general condition where $\mat{A}$ has no singular values with multiplicity greater than $b$. Both papers focus mostly on theoretical results, but the latter authors make the following observation:

\begin{quote}
	``A practical implementation of [Algorithm \ref{alg:rbl}] should involve, at the very least, a reorganization of the computation to use the three-term recurrence and bidiagonalization \cite{golub1972lanczos}, and reorthogonalization of the Lanczos vectors at each step using one of the numerous schemes that have been proposed \cite{golub1972lanczos,parlett1979lanczos,simon1984lanczos}.''
\end{quote}

The goal of this paper is to provide a practical implementation of Algorithm \ref{alg:rbl}, along with a method for efficiently estimating the Frobenius norm approximation error. 

\subsection{Contributions}

Our main contribution is the algorithm \texttt{randUBV} (Algorithm \ref{alg:randubv}), which uses the block Lanczos method to solve the fixed accuracy problem. It is for the most part a straightforward combination of the block Lanzcos bidiagonalization process \cite{golub1981block} shown in Algorithm \ref{alg:bidiag} with a randomized starting matrix $\mat{V}_1 = \mat{\Omega}$. As such, it yields a factorization of the form $\mat{UBV}\ts$, where $\mat{U}$ and $\mat{V}$ have orthonormal columns in exact arithmetic and $\mat{B}$ is block bidiagonal. Our secondary contribution is Theorem \ref{thm:accuracy}, which establishes bounds on the accuracy of the Frobenius norm error estimate \eqref{eqn:errorEstimate}. 

Our algorithm has two notable features that make it competitive with methods based on subspace iteration:
\begin{itemize}
	\item It accepts block sizes smaller than the target rank. Contrary to what an exact arithmetic analysis would suggest, the block Lanczos method can find multiple singular values of $\mat{A}$ even when the multiplicity is greater than the block size $b$. Large clusters in the spectrum of $\mat{A}$ are inconvenient, but not fatal. 
	
	We can therefore compare \texttt{randUBV} with adaptive methods such as \texttt{randQB\_EI} when the two are run with the same block size. They will have the same cost per iteration when the latter algorithm is run with power parameter $p=0$, and empirically \texttt{randUBV} converges faster. If \texttt{randQB\_EI} instead uses $p=1$ or $p=2$ then \texttt{randUBV} empirically requires more iterations to converge, but each iteration costs significantly less. 
	\item It uses {\it one-sided reorthogonalization}, wherein $\mat{V}$ is reorthogonalized but $\mat{U}$ is not. This technique was recommended in \cite{simon2000low} for the single-vector case (i.e., $b=1$), and leads to considerable cost savings when $\mat{A}$ is sparse and $m\gg n$. If $m \ll n$, our algorithm should be run on $\mat{A}\ts$ instead. The matrix $\mat{U}$ may slowly lose orthogonality in practice, but Theorem \ref{thm:accuracy} shows that our error estimate \eqref{eqn:errorEstimate} will still remain accurate. 
	
	For simplicity, we use full reorthogonalization on $\mat{V}$ as opposed to more carefully targeted methods such as those discussed in \cite{parlett1979lanczos,simon1984lanczos}. 
\end{itemize}

One other design choice merits discussion: {\it deflation} occurs when the blocks produced by the block Lanczos method are nearly rank-deficient and results in a reduction in the block size. In the event of deflation, we propose to augment the block Krylov space in order to keep the block column size constant. This will prevent the process from terminating early in extreme cases such as when $\mat{A}$ is the identity matrix. 

Numerical experiments on synthetic and real data suggest that \texttt{randUBV} generally compares favorably with \texttt{randQB} and its variants, at least on modestly sized problems.

\subsection{Outline}
The paper is organized as follows. In section \ref{sec:background}, we review the background of \texttt{QB} algorithms for the fixed-accuracy problem as well as the block Lanczos method. In section \ref{sec:main} we discuss several implemenation details including the choice of block size, deflation and augmentation, and one-sided reorthogonalization. We present our main algorithm in section \ref{sec:algorithm} and establish the accuracy of the error indicator. Our numerical experiments are in section \ref{sec:experiments}, and section \ref{sec:conclusions} offers our concluding remarks and some avenues for future exploration. 

\subsection{Notation}
Matrices, vectors, integers, and scalars will be respectively denoted by $\mat{A}$, $\mat{a}$, $a$, and $\alpha$. We use $\|\mat{A}\|_F$ and $\|\mat{A}\|_2$ for the Frobenius norm and operator norm, respectively, and $\mat{I}$ for the identity matrix whose dimensions can be inferred from context. We use MATLAB notation for matrix indices: i.e., $\mat{A}(i,j)$ and $\mat{A}(:,j)$ respectively represent the $(i,j)$ element and the $j$-th column of $\mat{A}$. 

For the cost analysis of our algorithm we use the same notation as in \cite{martinsson2016randomized,yu2018efficient}: $\cmm$ and $\cqr$ will represent constants so that the cost of multiplying two dense matrices of sizes $m\times n$ and $n\times l$ is taken to be $\cmm mnl$ and the cost of computing the QR factorization of an $m\times n$ matrix with $m\geq n$ is taken to be $\cqr mn^2$, or $\cqrcp mn^2$ if column pivoting is used.

\section{Background} \label{sec:background}

In this section we review the fixed-accuracy QB factorization algorithm \texttt{randQB\_EI} and the block Lanczos bidiagonalization process.

\subsection{A fixed-accuracy QB algorithm}

In order to extend Algorithm \ref{alg:randqb} to the fixed-accuracy problem, Yu, Gu, and Li \cite{yu2018efficient} make use of two key ideas. First, for a given block size $b\leq \ell$ the matrix $\mat{\Omega}$ can be generated $b$ columns at a time rather than all at once, allowing the resulting factors $\mat{Q}$ and $\mat{B}$ to be generated incrementally. Second, since $\mat{Q}$ has orthonormal columns and $\mat{B} = \mat{Q}\ts \mat{A}$, it follows \cite[Thm.~1]{yu2018efficient} that 
\begin{equation}
	\|\mat{A}-\mat{QB}\|_F^2 = \|\mat{A} - \mat{QQ}\ts \mat{A}\|_F^2 = \|\mat{A}\|_F^2 - \|\mat{QQ}\ts \mat{A}\|_F^2 = \|\mat{A}\|_F^2 - \|\mat{B}\|_F^2. 
\end{equation}
As long as the columns of $\mat{Q}$ are kept close to orthonormal, the Frobenius norm error can be efficiently estimated at each step simply by updating $\|\mat{B}\|_F$. It is therefore possible to compute the low-rank factorization $\mat{QB}$ and cheaply estimate its error without ever forming the error matrix $\mat{A}-\mat{QB}$ explicitly. Algorithm \texttt{randQB\_EI} incorporates both of these ideas, the second of which is particularly useful when $\mat{A}$ is sparse. 

Algorithm \ref{randqb_ei} presents code for \texttt{randQB\_EI}, which in exact arithmetic will output the same $\mat{QB}$ factorization as \texttt{randQB} when run to the same rank. It is noted in \cite{halko2011random} that a stable implementation of Algorithm \ref{alg:randqb} should include a reorthogonalization step after each application of $\mat{A}$ or $\mat{A}\ts$. The reorthogonalization step in Line \ref{line:reorthq} provides further stability. 

\begin{algorithm}
    \centering
    \caption{Blocked randQB algorithm (\texttt{randQB\_EI}) \cite[Alg.~2]{yu2018efficient}}\label{randqb_ei} 
    \begin{algorithmic}[1]
		\REQUIRE{$\mat{A}\in \R^{m\times n}$, block size $b\geq 1$, power parameter $p\geq 0$, tolerance $\tau$}
		\ENSURE{$\mat{Q}\in \R^{m\times \ell}$, $\mat{B}\in \R^{\ell\times n}$, such that $\|\mat{A}-\mat{QB}\|_F< \tau$}
		\STATE{$\mat{Q} = [\ ]$, $\mat{B} = [\ ]$}
		\STATE{$E = \|\mat{A}\|_F^2$}\hfill \COMMENT{(Approximate costs)}
		\FOR{$k = 1,2,3,\ldots$} 
		\STATE{Draw a random standard Gaussian matrix $\mat{\Omega}_k\in\R^{n\times b}$}
		\STATE{$\mat{Q}_k = \text{qr}(\mat{A\Omega}_k - \mat{Q}(\mat{B\Omega}_k))$}\hfill\COMMENT{$\cmm mnb + (k-1)\cmm(m+n)b^2 + \cqr mb^2$ }\label{line:stabq}
		\FOR{$j = 1:p$}
		\STATE{$\tilde{\mat{Q}}_k = \text{qr}(\mat{A}\ts\mat{Q}_k - \mat{B}\ts(\mat{Q}\ts \mat{Q}_k))$}\hfill\COMMENT{$\text{\textemdash}''\text{\textemdash} + \text{\textemdash\textemdash\textemdash\textemdash\textemdash}''\text{\textemdash\textemdash\textemdash\textemdash\textemdash} + \cqr nb^2$ }
		\STATE{$\mat{Q}_k = \text{qr}(\mat{A}\tilde{\mat{Q}}_k - \mat{Q}(\mat{B}\tilde{\mat{Q}}_k))$}\hfill\COMMENT{$\text{\textemdash}''\text{\textemdash} + \text{\textemdash\textemdash\textemdash\textemdash\textemdash}''\text{\textemdash\textemdash\textemdash\textemdash\textemdash} + \cqr mb^2$ }
		\ENDFOR
		\STATE{$\mat{Q}_k = \text{qr}(\mat{Q}_k - \mat{Q}(\mat{Q}\ts \mat{Q}_k))$}\hfill\COMMENT{$2(k-1)\cmm mb^2 + \cqr mb^2$}\label{line:reorthq}
		\STATE{$\mat{B}_k = \mat{Q}_k\ts\mat{A}$}\hfill\COMMENT{$\cmm mnb$ }
		\STATE{$\mat{Q} = [\mat{Q},\, \mat{Q}_k]$}
		\STATE{$\mat{B} = \begin{bmatrix}
			\mat{B}\ts,\, \mat{B}_k\ts
		\end{bmatrix}\ts$}
		\STATE{$E = E - \|\mat{B}_k\|_F^2$}
		\STATE{{\bf if} $E < \tau^2$ {\bf then stop}}
		\ENDFOR
    \end{algorithmic}
\end{algorithm}

Suppose that we stop Algorithm \ref{randqb_ei} after $t$ iterations, and set $\ell = tb$. The runtime of \texttt{randQB\_EI} can then be approximated as 
\begin{align}\label{cost:randqb}
	\begin{split}
	T_{\texttt{randQB\_EI}} &\approx 2\cmm mn\ell + \frac{1}{2}\cmm (3m+n)\ell^2 + \frac{2}{t}\cqr m\ell^2\\
		&\ \  + p\left( 2\cmm mn\ell + \cmm(m+n)\ell^2 + \frac{1}{t}\cqr (m+n)\ell^2\right),
	\end{split}
\end{align}
where the cost increases more or less proportionally to $p+1$. By comparison, the cost of the fixed-rank prototype algorithm \texttt{randQB} can be approximated as 
\begin{equation}\label{cost:qb}
	T_{\texttt{randQB}} \approx 2(p+1)\cmm mn\ell + \cqr m\ell^2. 
\end{equation}

\subsection{Block Lanczos bidiagonalization}
Here we describe a block Lanczos method for reducing a matrix to block bidiagonal form. Since this method generalizes the single-vector algorithm by Golub and Kahan \cite{golub1965calculating} commonly known as the Golub-Kahan-Lanczos process, we will abbreviate it as \texttt{bGKL}. 

The \texttt{bGKL} process was introduced by Golub, Luk, and Overton \cite{golub1981block} to find the largest singular values and associated singular vectors of a large and sparse matrix. Since then, it has been applied to both least squares problems \cite{karimi2006block,toutounian2015block} and total least squares problems \cite{bjorck2008block,hnetynkova2015band} with multiple right-hand sides. 

The process takes a matrix $\mat{A}\in \R^{m\times n}$ and matrix $\mat{V}_1 \in \R^{n\times b}$ with orthonormal columns, and after $k$ steps produces the orthonormal bases $\mat{U}_{(k)} = [\mat{U}_1, \cdots, \mat{U}_k]$ and $\mat{V}_{(k+1)} = [\mat{V}_1,\cdots, \mat{V}_{k+1}]$ satisfying
\begin{align*}
	\Span\left\{\mat{U}_{(k)}\right\} &= \Span\left\{ \mat{A}\mat{V}_1, \mat{A}(\mat{A}\ts\mat{A})\mat{V}_1,\ldots,\mat{A}(\mat{A}\ts\mat{A})^{k-1}\mat{V}_1\right\}, \\
	\Span\left\{\mat{V}_{(k+1)}\right\} &= \Span\left\{ \mat{V}_1, (\mat{A}\ts\mat{A})\mat{V}_1,\ldots,(\mat{A}\ts\mat{A})^{k}\mat{V}_1\right\}. 
\end{align*}
Furthermore, it produces the $kb\times (k+1)b$ block bidiagonal matrix
\begin{equation}
	\mat{B}_{k} = \begin{bmatrix}
		\mat{R}_1 & \mat{L}_2 & &  & \\
		& \mat{R}_2 & \ddots & &\\
		& & \ddots & \mat{L}_k & \\
		& & & \mat{R}_k & \mat{L}_{k+1}
	\end{bmatrix}
\end{equation}
so that at each step of the process the relations 
\begin{equation}\label{eqn:gkrelations}
	\mat{AV}_{(k)} = \mat{U}_{(k)}\mat{B}_k(:,1:kb)
	\quad\text{and}\quad \mat{A}\ts\mat{U}_{(k)} = \mat{V}_{(k+1)}\mat{B}_k\ts
\end{equation}
are satisfied. Assuming no loss of rank, the blocks $\{\mat{R}_i\}_{i=1}^k$ or $\{\mat{L}_i\}_{i=1}^{k+1}$ are respectively $b\times b$ upper and lower triangular. 

\begin{algorithm}
\caption{Block Lanczos bidiagonalization process (\texttt{bGKL}) \cite{golub1981block}}
\label{alg:bidiag}
\begin{algorithmic}[1]
\REQUIRE{$\mat{A}\in \R^{m\times n}$, matrix $\mat{V}_1 \in \R^{n\times b}$ with orthonormal columns}
\STATE{$\mat{U}_0 = \mat{0}$; $\mat{L}_1 = \mat{0}$}\hfill \COMMENT{(Approximate costs)}
\FOR{$k=1,2,\ldots$}
\STATE{$\mat{U}_{k}\mat{R}_{k} = \text{qr}(\mat{A}\mat{V}_{k} - \mat{U}_{k-1}\mat{L}_{k})$}\hfill\COMMENT{$\cmm mnb + \frac{1}{2}\cmm mb^2 + \cqr mb^2$} \label{line:rec1}
\STATE{$\mat{V}_{k+1}\mat{L}_{k+1}\ts = \text{qr}(\mat{A}\ts \mat{U}_{k} - \mat{V}_{k}\mat{R}_{k}\ts)$}\hfill\COMMENT{$\cmm mnb + \frac{1}{2}\cmm nb^2 + \cqr nb^2$} \label{line:rec2}
\ENDFOR
\end{algorithmic}
\end{algorithm}

The basic outline of the process is given in Algorithm \ref{alg:bidiag}, where the costs assume no loss of rank in the blocks $\{\mat{R}_i\}_{i=1}^k$ or $\{\mat{L}_i\}_{i=1}^{k+1}$. We note that the original algorithm in \cite{golub1981block} is organized so that $\mat{B}_k$ is square at the end of each iteration. Our current presentation more directly mimics the $\mat{QB}$ factorization, since $\mat{U}_{(k)}\mat{B}_k\mat{V}_{(k+1)}\ts = \mat{U}_{(k)}\mat{U}_{(k)}\ts \mat{A}$ by the second relation in \eqref{eqn:gkrelations}. It follows that in exact arithmetic the identity 
\begin{equation}\label{eqn:errorEstimate}
	\|\mat{A} - \mat{U}_{(k)}\mat{B}_k\mat{V}_{(k+1)}\ts\|_F^2 =
	\|\mat{A}\|_F^2 - \|\mat{B}_k\|_F^2
\end{equation}
 will hold, and so the \texttt{bGKL} process can be readily adapted to find a fixed-accuracy approximation to $\mat{A}$. 
 
 Suppose that we stop the process after $t$ iterations and set $\ell = tb$. The runtime of the \texttt{bGKL} process can then be approximated as 
 \begin{equation}\label{cost:bgkl}
 	T_{\texttt{bGKL}} \approx 2\cmm mn\ell + \frac{1}{2t}\cmm (m+n)\ell^2 + \frac{1}{t}\cqr (m+n)\ell^2.
 \end{equation}
 At this point, it is not fair to compare this cost to the cost of \eqref{cost:randqb} because we have not yet accounted for the cost of reorthogonalization in \texttt{bGKL}, which is necessary for stability. Nonetheless, it suggests that we may be able to obtain an algorithm based on \texttt{bGKL} that costs no more per iteration than \texttt{randQB\_EI} with power parameter $p=0$.

\section{Implementation details}
\label{sec:main}
In this section we discuss how to handle several important issues in the implementation of our fixed-accuracy algorithm. The first concerns the difficulty the Lanczos method encounters when $\mat{A}$ has large singular value clusters. The second is the matter of ensuring that the columns of $\mat{U}_{(k)}$ and $\mat{V}_{(k)}$ remain close to orthonormal, and the third is the use of deflation and augmentation when the blocks $\mat{R}_k$ or $\mat{L}_{k}$ are rank-deficient.

\subsection{Block size and singular value clusters}
It is known that if $\mat{A}$ has a singular value with multiplicity greater than the block size $b$, then in exact arithmetic the block Lanczos process will recover at most $b$ of those singular values. More generally, if the spectrum of $\mat{A}$ has a cluster of size greater than $b$ then the approximate singular vectors recovered by the Lanczos process may converge slowly. This behavior stands in stark contrast to that of blocked subspace iteration methods such as \texttt{randQB\_EI}, whose outputs do not in exact arithmetic depend on $b$. 

For the first situation\textemdash singular values with multiplicity greater than $b$\textemdash classical results tend to examine a restricted problem. Saad \cite{saad1980rates} notes that the Lanczos process would simply behave as though it were being performed on a restricted matrix $\mat{A}|_{\mat{S}}$ whose singular values\footnote{Strictly speaking, Saad's analysis is for block Lanczos tridiagonalization applied to a symmetric matrix as opposed to Lanczos bidiagonalization applied a rectangular matrix. Our focus is on bidiagonalization, but the two processes are closely related. \label{note:lanczos}} had multiplicity at most $b$. There is therefore ``no loss of generality'' in assuming that the singular values of $\mat{A}$ have multiplicity bounded by $b$ for the purpose of analyzing convergence rates. Other more recent works restrict their attention to the case where the cluster size is bounded by $b$ \cite{li2015convergence}, or where $b$ is greater than or equal to the target rank $r$ \cite{musco2015randomized,wang2015improved,drineas2018structural}. 

The analysis of Yuan, Gu, and Li \cite{yuan2018superlinear} makes an important advancement by allowing for cluster sizes (though not multiplicity) greater than $b$, and showing that even within a large cluster the recovered singular values will converge superlinearly in the number of Lanczos iterations. Their numerical experiments on real-world data suggest that smaller block sizes generally lead to faster convergence with respect to the number of flops expended. 
 
As it turns out, even singular values with multiplicity greater than $b$ are not fatal to the Lanczos process. Parlett \cite{parlett1979lanczos} notes that since ``rounding errors introduce components in all directions'', even repeated singular vectors\footnote{See footnote \ref{note:lanczos}.} will eventually be found. Simon and Zha \cite{simon2000low} add the caveat that the singular vectors will not converge in consecutive order: the Lanczos process will likely find several smaller singular values of $\mat{A}$ before it finds copies of the larger repeated ones. What we should expect in practice is that a singular value of multiplicity greater than $b$ (or a cluster of comparable size) will delay convergence, but not prevent it entirely. 

Thus in spite of complications in the {\it analysis} of the block Lanczos method, using a smaller block size can be quite effective in practice. Even when $\mat{A}$ has clusters larger than the block size, we can obtain a good approximation simply by increasing the number of Lanczos iterations. Our numerical experiments support this notion: although we can construct synthetic examples for which \texttt{randUBV} is inferior to methods that use subspace iteration, our algorithm performs quite well on a real-world example with large clusters. 

\subsubsection{Adaptive block size}
An alternate method for dealing with clusters is offered in \cite{ye1996adaptive} and explored further in \cite{bai1999able, zhou2008block}: instead of keeping the block size constant, we may periodically augment the block Krylov space with new vectors in order to better approximate clusters. The rough idea would be to monitor the singular values of $\mat{B}_k$, and to increase the block size $b$ so that it remains larger than the largest cluster in $\mat{B}_k$. For the sake of keeping the implementation of our algorithm simple, we leave this extension for future exploration. 

\subsection{One-sided reorthogonalization}
In exact arithmetic, the matrices $\mat{U}_{(k)}$ and $\mat{V}_{(k)}$ will have orthonormal columns. In practice, they will quickly lose orthogonality due to roundoff error, and so we must take additional steps to mitigate this loss of orthogonality. 

For the single-vector case $b=1$, Simon and Zha \cite{simon2000low} observe that it may suffice to reorthogonalize only one of $\mat{U}_{(k)}$ or $\mat{V}_{(k)}$ in order to obtain a good low-rank approximation. They suggest that if the columns of $\mat{V}_{(k)}$ alone are kept close to orthonormal, then $\mat{U}_{(k)}\mat{B}_{k}\mat{V}_{(k+1)}\ts$ will remain a good approximation to $\mat{A}$ regardless of the orthogonality of $\mat{U}_{(k)}$. Separately, experiments by Fong and Saunders \cite{fong2011lsmr} in the context of least-squares problems suggest that keeping $\mat{V}_{(k)}$ orthonormal to machine precision $\emach$ might be enough to keep $\mat{U}_{(k)}$ orthonormal to at least $\mathcal{O}(\sqrt{\emach})$, at least until the least-squares solver reaches a relative backward error of $\sqrt{\emach}$. For the sake of computational efficiency, we therefore choose to explicitly reorthogonalize $\mat{V}_{(k)}$ but not $\mat{U}_{(k)}$ (assuming that $m\geq n$).

Reorthogonalization can take up a significant portion of the runtime of our algorithm, particularly if $\mat{A}$ is sparse. However, it is known for the Lanczos process that orthogonality is lost only in the direction of singular vectors that have already converged \cite{paige1971computation}. Thus in a high-quality implementation, it should be possible to save time by orthogonalizing each block $\mat{V}_{k}$ against a smaller carefully chosen set of vectors obtained from $\mat{V}_{(k-1)}$ (see \cite{parlett1979lanczos,grcar1982analyses,simon1984lanczos} for a few such proposals). In our implementation, we use full reorthogonalization for simplicity. We note that even if $\mat{A}$ is square, full reorthogonalization will cost no more than the equivalent step in \texttt{randQB\_EI} (line \ref{line:reorthq} of Algorithm \ref{randqb_ei}). 

\subsection{Deflation}
In practice, the block Lanczos process may yield blocks $\mat{R}_k$ or $\mat{L}_k$ that are rank-deficient or nearly so. Here and with other block Krylov methods, it is typical to reduce the block size $b$ in response so that $\mat{R}_k$ and $\mat{L}_k$ retain full row rank and column rank, respectively. This process is known as {\it deflation}. For more background, we refer the reader to the survey paper by Gutknecht \cite{gutknecht2006block} and the references therein. 

In the context of solving systems with multiple right-hand sides, Gutknecht stresses that deflation is highly desirable. Indeed, when solving a system such as $\mat{AX} = \mat{B}$, it is precisely the dimension reduction resulting from deflation that gives block methods an advantage over methods that solve each right hand side separately. In this context, deflation might occur if $\mat{B}$ is itself rank-deficient, or if $\mat{B}$ has some notable rank structure in relation to the matrix $\mat{A}$. When running block Lanczos with a randomly chosen starting matrix $\mat{V}_1$ (i.e., $\mat{V}_1 = \text{qr}(\mat{\Omega})$ and $\mat{\Omega}$ is a standard Gaussian matrix), we do not expect deflation to occur frequently since $\mat{\Omega}$ is not likely to have any notable structure with respect to $\mat{A}$. Nonetheless, a reliable implementation should be prepared for the possibility, and so we examine the details here. 

Bj\"{o}rck \cite{bjorck2008block} proposes computing the QR factorizations in lines \ref{line:rec1}--\ref{line:rec2} of Algorithm \ref{alg:bidiag} using Householder reflections without column pivoting. The resulting matrix $\mat{B}_k$ will be not just block bidiagonal, but a banded matrix whose effective bandwidth begins at $b$ and decreases with each deflation. Hn\v{e}tynkov\'{a} et al.~\cite{hnetynkova2015band} refer to $\mat{B}_k$ as a {\it $b$-wedge shaped matrix}. If the effective bandwidth decreases to zero, the bidiagonalization process will terminate.

\begin{algorithm}
    \centering
    \caption{Deflated QR (\texttt{deflQR})} \label{alg:deflqr}
    \begin{algorithmic}[1]
		\REQUIRE{$\mat{X}\in \R^{m\times n}$, deflation tolerance $\delta$}
		\ENSURE{$\mat{Q}\in \R^{m\times s}$ with orthonormal columns, $\mat{R}\in \R^{s\times n}$}, rank $s$
		\STATE{Compute the pivoted QR factorization $\mat{X}\mat{\Pi} = \widehat{\mat{Q}}\widehat{\mat{R}}$}
		\STATE{Find the largest $s$ such that $|\widehat{\mat{R}}(s,s)| \geq \delta$}
		\STATE{$\mat{R} = \widehat{\mat{R}}(1:s,:)\mat{\Pi}\ts$}
		\STATE{$\mat{Q} = \widehat{\mat{Q}}(:,1:s)$}
    \end{algorithmic}
\end{algorithm}

We propose to instead use QR with column pivoting, which is slower and less elegant but simpler to implement in terms of readily available subroutines. The procedure is outlined in Algorithm \ref{alg:deflqr}, where the deflation tolerance $\delta$ is presumably somewhat larger than $\emach\|\mat{A}\|_2$. Lines \ref{line:rec1}--\ref{line:rec2} of Algorithm \ref{alg:bidiag} would use this modified routine in place of unpivoted QR, and as Bj\"{o}rck \cite{bjorck2008block} notes the recurrence in those lines will still work in the presence of deflation. 
 
\subsection{Augmentation}
When using block Lanczos to solve systems of linear equations, deflation can be highly beneficial. In the context of matrix sketching, it is less desirable. Consider an extreme example where the columns of $\mat{V}_1$ are right singular vectors of $\mat{A}$: the Lanczos process will terminate after a single iteration, returning an approximation of the form $\mat{A} \approx \mat{U}_1\mat{\Sigma}\mat{V}_1\ts$. Termination at this point would yield accurate singular vectors, but the factorization may not approximate $\mat{A}$ to within the desired error tolerance. 

As mentioned before, we do not expect deflation to occur frequently if $\mat{V}_1$ is chosen randomly. However, if we do not make any further adjustments for deflation our algorithm would fail to converge on cases as simple as $\mat{A} = \mat{I}$. In order to make our method more robust, we will replace any deflated vectors with new randomly drawn ones in order to keep the block column size constant. Similar augmentation techniques have been proposed to prevent breakdown in the case of the nonsymmetric Lanczos process \cite{ye1994breakdown} and GMRES \cite{reichel2005breakdown}. 

More specifically, if Algorithm \ref{alg:deflqr} returns a factorization $\mat{V}_k\mat{L}_k\ts$ with rank less than $b$,  we generate a standard Gaussian matrix $\mat{\Omega}_k$ so that $[\mat{V}_k,\ \mat{\Omega}_k]$ has $b$ columns. We then orthogonalize $\mat{\Omega}_k$ against $\mat{V}_k$ and $\mat{V}_{(k-1)}$, obtaining $\mat{V}_k'$. The resulting matrix $[\mat{V}_k,\mat{V}_k']$ is then used in place of $\mat{V}_k$ in the next step of the Lanczos process. 

 In keeping with the spirit of one-sided reorthogonalization, we do not augment $\mat{U}_k$ if a block $\mat{R}_k$ is found to be rank deficient. This will allow us to avoid accessing the matrix $\mat{U}_{(k-1)}$ while the block Lanczos process is running. As a consequence, the blocks of $\mat{B}_k$ will each have $b$ columns, but some may have fewer than $b$ rows. 

We observe that in the presence of augmentation, the space $\text{Span}\left\{\mat{V}_{(k)}\right\}$ will not be a block Krylov space, but will instead be the sum of multiple block Krylov spaces with different dimensions. As of the time of writing we are not aware of any convergence results for this more general case.

\section{Fixed-accuracy algorithm} \label{sec:algorithm}
Algorithm \ref{alg:randubv} presents code for $\texttt{randUBV}$. Ignoring the augmentation step in line \ref{line:augment}, the cost is more or less equal to the cost of \texttt{bGKL} plus the cost of reorthogonalizing $\mat{V}_{k+1}$ in Line \ref{line:reorthV}. Thus if we stop the process after $t$ iterations and set $\ell = tb$, the total cost is approximately
\begin{equation}\label{cost:randubv}
	T_{\texttt{randUBV}} \approx 2\cmm mn\ell + \cmm n\ell^2+\frac{1}{2t}\cmm (m+n)\ell^2 + \frac{1}{t}\cqr(m+n)\ell^2. 
\end{equation}
Comparing this quantity to \eqref{cost:randqb}, we see that \texttt{randUBV} requires fewer floating points operations than \texttt{randQB\_EI} when run for the same number of iterations, even when the latter is run with power parameter $p=0$. In particular, the cost of one-sided reorthogonalization is only $\mathcal{O}(n\ell^2)$ while the stabilization steps in lines \ref{line:stabq} and \ref{line:reorthq} of \texttt{randQB\_EI} cost $\mathcal{O}((m+n)\ell^2)$. We can therefore expect that if $\mat{A}$ is sparse and $m\gg n$, \texttt{randUBV} may run significantly faster. 

\begin{algorithm}
    \centering
    \caption{Blocked Bidiagonalization algorithm (\texttt{randUBV})}\label{alg:randubv}
    \begin{algorithmic}[1]
		\REQUIRE{$\mat{A}\in \R^{m\times n}$, block size $b$, relative error $\tau$, deflation tolerance $\delta$}
		\ENSURE{$\mat{U}$, $\mat{B}$, $\mat{V}$, such that $\|\mat{A}-\mat{UBV}\ts\|_F< \tau$}
		\STATE{$E = \|\mat{A}\|_F^2$}\hfill\COMMENT{(Approximate costs)}
		\STATE{Draw a random standard Gaussian matrix $\mat{\Omega}\in \R^{n\times b}$}
		\STATE{$\mat{V}_1 = \text{qr}(\mat{\Omega})$}\hfill\COMMENT{$\cqr nb^2$}
		\STATE{$\mat{U}_1 = \mat{0}$; $\mat{L}_1 = \mat{0}$}
		\STATE{$\mat{V} = \mat{V}_1$; $\mat{U} = \mat{U}_1$}
		\FOR{$k = 1,2,3,\ldots$}
		\STATE{$[\mat{U}_k,\mat{R}_k] = \texttt{deflQR}(\mat{A}\mat{V}_{k}-\mat{U}_{k-1}\mat{L}_k,\delta)$}\hfill\COMMENT{$\cmm mnb + \frac{1}{2}\cmm mb^2 + \cqrcp mb^2$}
		\STATE{$\mat{U} = [\mat{U},\mat{U}_k]$}
		\STATE{$E = E - \|\mat{R}_k\|_F^2$}
		\STATE{$\mat{V}_{k+1} = \mat{A}\ts \mat{U}_{k} - \mat{V}_{k}\mat{R}_{k}\ts$} \hfill\COMMENT{$\cmm mnb + \frac{1}{2}\cmm nb^2$}
		\STATE{$\mat{V}_{k+1} = \mat{V}_{k+1} - \mat{V}(\mat{V}\ts \mat{V}_{k+1})$}\hfill\COMMENT{$2k\cmm nb^2$} \label{line:reorthV}
		\STATE{$[\mat{V}_{k+1},\mat{L}_{k+1}\ts,s] = \texttt{deflQR}(\mat{V}_{k+1},\delta)$}\hfill\COMMENT{$\cqrcp nb^2$}
		\STATE{$\mat{V} = [\mat{V},\mat{V}_{k+1}]$}
		\IF{$s < b$}
		\STATE{Draw a random standard Gaussian matrix $\mat{\Omega}_k\in \R^{n\times (b-s)}$}
		\STATE{$\mat{V}_{k+1}' = \text{qr}(\mat{\Omega}_k - \mat{V}(\mat{V}\ts \mat{\Omega}_k))$}\hfill\COMMENT{$2k\cmm nb(b-s) + \cqr n(b-s)^2$}\label{line:augment}
		\STATE{$\mat{V} = [\mat{V},\mat{V}_{k+1}']$}
		\ENDIF
		\STATE{$E = E - \|\mat{L}_{k+1}\|_F^2$}
		\STATE{{\bf if} $E < \tau^2\|\mat{A}\|_F^2$ {\bf then stop}}
		\ENDFOR
    \end{algorithmic}
\end{algorithm}

Since our focus is on the fixed-accuracy algorithm, however, different algorithms (and for \texttt{randQB\_EI}, different power parameters $p$) will converge after different numbers of iterations. We must therefore consider not just the cost per iteration, but how quickly the approximations converge. We discuss this matter further along with the numerical experiments in section \ref{sec:experiments}.

\subsection{Approximation accuracy}
It is noted in \cite{yu2018efficient} that due to cancellation, the computed value of $E = \|\mat{A}\|_F^2 - \|\mat{B}\|_F^2$ may be inaccurate when $E$ is very small. In order to estimate the error $E$ to within a relative tolerance of $\gamma$ (say, $\gamma = 1\%$), the authors suggest that the absolute accuracy tolerance $\tau$ for the QB factorization should be set large enough to satisfy
\begin{equation}
	\tau > \sqrt{E} \geq \sqrt{\frac{4\epsilon_\text{mach}}\gamma}\|\mat{A}\|_F,
\end{equation}
where $\epsilon_{\text{mach}}$ is the machine precision. In short, the proposed method of error estimation cannot reliably estimate a relative error below $2\sqrt{\epsilon_\text{mach}}$. 

We provide a similar analysis in order to account for deflation and loss of orthogonality of $\mat{U}_{(k)}$. In particular, we show that the error estimate can remain accurate even as $\mat{U}_{(k)}$ loses orthogonality in practice. To that end, we define the {\it local loss of orthogonality} of a matrix as follows: 
\begin{definition}\label{def:epsk}
	Given a matrix $\mat{U}_{(k)} = [\mat{U}_1,\ldots,\mat{U}_k]$, the local loss of orthogonality of $\mat{U}_{(k)}$ is defined as  
	\[
		\eps_k = \max \left\{\max_{1\leq i \leq k} \|\mat{U}_i\ts\mat{U}_i - \mat{I}\|_2,\ \max_{2\leq i\leq k} \|\mat{U}_{i-1}\ts\mat{U}_i\|_2 \right\} 
	\]
\end{definition}
The main idea is that we do not require $\|\mat{U}_{(k)}\ts \mat{U}_{(k)}-\mat{I}\|_2$ to be small. Instead, we need only the milder condition that adjacent blocks be close to orthogonal. This idea bears some resemblance to the work \cite{strakovs2002error}, which uses local recurrence formulas to show that certain error estimates for the conjugate gradient method remain accurate in a finite precision setting.

\begin{lemma}\label{lemma:local}
	Consider the matrix $\mat{U}_{(k)} = [\mat{U}_1,\ldots,\mat{U}_k]$, and let $\eps_k$ denote the local loss of orthogonality of $\mat{U}_{(k)}$. Let $\mat{B}_k$ be a block upper bidiagonal matrix whose blocks are partitioned conformally with those of $\mat{U}_{(k)}$. Then 
	\[\|\mat{U}_{(k)}\mat{B}_k\|_F^2 = (1+\theta)\|\mat{B}_k\|_F^2, \quad |\theta| \leq 2\eps_k.\]
\end{lemma}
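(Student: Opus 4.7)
The plan is to exploit the block bidiagonal structure of $\mat{B}_k$ to decompose $\|\mat{U}_{(k)}\mat{B}_k\|_F^2$ into a sum over block columns, each of which involves only two adjacent blocks of $\mat{U}_{(k)}$. Because the definition of $\eps_k$ controls exactly the Gram matrices of one block and of two consecutive blocks, this decomposition will let me localize the loss-of-orthogonality errors.

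Concretely, I would partition $\mat{B}_k$ into block rows with entries $\mat{R}_i$ on the diagonal and $\mat{L}_{i+1}$ on the superdiagonal (conformally with the columns of $\mat{U}_{(k)}$). Then the $j$-th block column of $\mat{U}_{(k)}\mat{B}_k$ is $\mat{X}_1 = \mat{U}_1\mat{R}_1$ and, for $j>1$, $\mat{X}_j = \mat{U}_{j-1}\mat{L}_j + \mat{U}_j\mat{R}_j$. Since block columns contribute additively to the Frobenius norm, I get
\begin{equation*}
\|\mat{U}_{(k)}\mat{B}_k\|_F^2 = \sum_{j} \|\mat{X}_j\|_F^2,
\qquad
\|\mat{B}_k\|_F^2 = \sum_j \bigl(\|\mat{L}_j\|_F^2 + \|\mat{R}_j\|_F^2\bigr),
\end{equation*}
with the convention $\mat{L}_1 = \mat{0}$. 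Expanding $\|\mat{X}_j\|_F^2 = \trace(\mat{X}_j\ts \mat{X}_j)$ produces three kinds of terms: $\trace(\mat{L}_j\ts \mat{U}_{j-1}\ts \mat{U}_{j-1}\mat{L}_j)$, $\trace(\mat{R}_j\ts \mat{U}_j\ts \mat{U}_j\mat{R}_j)$, and a cross term $2\trace(\mat{L}_j\ts \mat{U}_{j-1}\ts \mat{U}_j\mat{R}_j)$.

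Next I write $\mat{U}_{j-1}\ts\mat{U}_{j-1}=\mat{I}+\mat{E}_{j-1}$, $\mat{U}_j\ts\mat{U}_j=\mat{I}+\mat{E}_j$, and $\mat{U}_{j-1}\ts\mat{U}_j=\mat{F}_j$, where by Definition~\ref{def:epsk} each of $\|\mat{E}_{j-1}\|_2$, $\|\mat{E}_j\|_2$, $\|\mat{F}_j\|_2$ is at most $\eps_k$. The diagonal perturbations immediately give $|\trace(\mat{L}_j\ts \mat{E}_{j-1}\mat{L}_j)| \leq \eps_k\|\mat{L}_j\|_F^2$ and analogously for $\mat{R}_j$. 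For the cross term, Cauchy--Schwarz in the Frobenius inner product followed by $\|\mat{F}_j\mat{R}_j\|_F \leq \|\mat{F}_j\|_2\|\mat{R}_j\|_F$ yields $|2\trace(\mat{L}_j\ts \mat{F}_j\mat{R}_j)| \leq 2\eps_k \|\mat{L}_j\|_F\|\mat{R}_j\|_F \leq \eps_k(\|\mat{L}_j\|_F^2 + \|\mat{R}_j\|_F^2)$ by AM--GM. Combining these three bounds gives $\bigl|\|\mat{X}_j\|_F^2 - (\|\mat{L}_j\|_F^2 + \|\mat{R}_j\|_F^2)\bigr| \leq 2\eps_k(\|\mat{L}_j\|_F^2 + \|\mat{R}_j\|_F^2)$, and summing over $j$ produces exactly $|\theta| \leq 2\eps_k$.

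The only real obstacle is ensuring the constant comes out to $2\eps_k$ rather than something looser. This is handled by using the AM--GM splitting on the cross term so that the sum of the three error contributions for each block column is $2\eps_k$ times the ``ideal'' value $\|\mat{L}_j\|_F^2 + \|\mat{R}_j\|_F^2$; any cruder bound (e.g., separating $\|\mat{L}_j\|_F\|\mat{R}_j\|_F$ without AM--GM) would yield a worse constant. Everything else is a routine block-wise computation that relies only on the bidiagonal sparsity pattern, which ensures that no pair of non-adjacent blocks $\mat{U}_i,\mat{U}_j$ ever appears together and hence the global loss of orthogonality of $\mat{U}_{(k)}$ is never needed.
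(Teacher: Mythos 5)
Your proof is correct and follows essentially the same route as the paper's: decompose by block columns (each involving at most two adjacent blocks of $\mat{U}_{(k)}$), bound the two diagonal terms via $\|\mat{U}_i\ts\mat{U}_i - \mat{I}\|_2 \leq \eps_k$, and bound the cross term via Cauchy--Schwarz, $\|\mat{U}_{i-1}\ts\mat{U}_i\|_2 \leq \eps_k$, and AM--GM, then sum. The only cosmetic difference is that you name the perturbations $\mat{E}_j$ and $\mat{F}_j$ explicitly; otherwise the argument, including the handling of the first (and implicitly last) block column, matches the paper.
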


\begin{proof}
	We will find the squared Frobenius norm of $\mat{U}_{(k)}\mat{B}_k$ one block column at a time, and use the fact that since $\mat{B}_k$ is block bidiagonal, each block column in the product uses at most two adjacent blocks of $\mat{U}_{(k)}$. 
	
	Let $\{\mat{R}_i\}_{i=1}^k$ denote the blocks on the main block diagonal of $\mat{B}_k$, and let $\{\mat{L}_i\}_{i=2}^{k+1}$ denote the off-diagonal blocks. Then for $2\leq i \leq k$, the squared Frobenius norm of the $i$-th block column of $\mat{U}_{(k)}\mat{B}_k$ is given by 
	\begin{equation}
				\|\mat{U}_{i-1}\mat{L}_{i} + \mat{U}_i\mat{R}_i\|_F^2 = \|\mat{U}_{i-1}\mat{L}_i\|_F^2 + \|\mat{U}_i\mat{R}_i\|_F^2 + 2\trace\left( \mat{R}_i\ts \mat{U}_i\ts \mat{U}_{i-1}\mat{L}_i\right).
	\end{equation}
	Examining the first term, it can be seen that
	\begin{align*}
		\|\mat{U}_{i-1}\mat{L}_i\|_F^2 &= \trace( \mat{L}_i\ts \mat{U}_{i-1}\ts\mat{U}_{i-1}\mat{L}_i)\\
		&= \trace( \mat{L}_i\ts( \mat{U}_{i-1}\ts\mat{U}_{i-1}-\mat{I})\mat{L}_i) + \trace( \mat{L}_i\ts\mat{L}_i)\\
		&= (1+\theta_1)\|\mat{L}_i\|_F^2,
	\end{align*}
	where $|\theta_1|\leq \eps_k$. A similar result applies to the term $\|\mat{U}_i\mat{R}_i\|_F^2$. As for the final term, we find that  
	\begin{align*}
		2|\trace \mat{R}_i\ts \mat{U}_i\ts \mat{U}_{i-1}\mat{L}_i| &\leq 2\| \mat{U}_i\ts \mat{U}_{i-1}\|_2\|\mat{R}_i\|_F\|\mat{L}_i\|_F\\
		&\leq 2\eps_k\|\mat{R}_i\|_F\|\mat{L}_i\|_F, \\
		&\leq \eps_k(\|\mat{R}_i\|_F^2 + \|\mat{L}_i\|_F^2).
	\end{align*}
	By adding these expressions back together we arrive at the bound
	\begin{equation}
				\|\mat{U}_{i-1}\mat{L}_{i} + \mat{U}_i\mat{R}_i\|_F^2 = (1+\theta)(\|\mat{R}_i\|_F^2 + \|\mat{L}_i\|_F^2), \quad |\theta|\leq 2\eps_k,
	\end{equation}
	so the desired relative error bound holds for each block column (the first and last columns may be checked separately). The main claim then follows by summing over the block columns. 
\end{proof}

Next, we observe that with one-sided reorthogonalization of $\mat{V}_{(k)}$ and in the absence of deflation, the {\it first} relation in \eqref{eqn:gkrelations} will remain accurate to machine precision regardless of the orthogonality of $\mat{U}_{(k)}$ (as noted in \cite{simon2000low}, the second relation will not). In the presence of deflation, the first relation must be amended slightly. We rewrite it as
\begin{equation}\label{eqn:newgkrelation}
	\mat{A}\mat{V}_{(k)} = \mat{U}_{(k)}\mat{B}_k' + \mat{D}_{k},
\end{equation}
where $\mat{B}_k'$ is shorthand for $\mat{B}_k(:,1:kb)$ and $\mat{D}_k$ is a matrix accounting for all deflations in $\mat{U}_{(k)}$. Assuming the column pivoting in Algorithm \ref{alg:deflqr} selects at each step the column with the largest 2-norm, it can be verified that $\|\mat{D}_k\|_F \leq \delta \sqrt{d}$, where $\delta$ is the deflation tolerance and $d$ is the total number of columns that have been removed from $\mat{U}_{(k)}$ through deflation. 

We now show that the error estimate $E = \|\mat{A}\|_F^2 - \|\mat{B}_k\|_F^2$ will remain accurate up to terms involving the deflation tolerance and the local loss of orthogonality in $\mat{U}_{(k)}$. The proof makes the simplifying assumptions that $\mat{V}_{(k+1)}$ has orthonormal columns and that there is no rounding error term in \eqref{eqn:newgkrelation}, but accounting for both of these effects will change the bound \eqref{eqn:accuracyBound} by at most $\mathcal{O}(\emach \|\mat{A}\|_F^2)$. The proof also ignores the effect of cancellation in the computation of $E$, so as with \cite{yu2018efficient} we cannot expect to reliably estimate a relative error below $\sqrt{\emach}$. 
\begin{theorem}\label{thm:accuracy}
	Given a matrix $\mat{A}$, let $\mat{U}_{(k+1)}$, $\mat{B}_{k+1}'$, and $\mat{V}_{(k+1)}$ be as produced by Algorithm \ref{alg:randubv} with deflation tolerance $\delta$. Let $\eps_{k+1}$ denote the local loss of orthogonality of $\mat{U}_{(k+1)}$. Assume that $\mat{V}_{(k+1)}$ has orthonormal columns. Assume that \eqref{eqn:newgkrelation} holds exactly at each iteration, and let $d$ be the number of columns removed from $\mat{U}_{(k+1)}$ due to deflation. Finally, let $E = \|\mat{A}\|_F^2 - \|\mat{B}\|_F^2$. Then 
	\begin{equation}\label{eqn:accuracyBound}
		\|\mat{A}-\mat{U}_{(k)}\mat{B}_k\mat{V}_{(k+1)}\ts \|_F^2 \leq E + 4\eps_{k+1}\|\mat{A}\|_F^2 + 2\delta\sqrt{d}(1+2\eps_{k+1})\|\mat{A}\|_F.
	\end{equation}
\end{theorem}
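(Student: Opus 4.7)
The plan is to decompose the squared residual Pythagorean-style using the assumed orthonormality of $\mat{V}_{(k+1)}$, substitute \eqref{eqn:newgkrelation} at iteration $k+1$ to trade geometric quantities for algebraic ones, and finally apply Lemma \ref{lemma:local} to absorb the loss-of-orthogonality corrections into an $\mathcal{O}(\eps_{k+1})$ term.

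First I would write $\mat{A} = \mat{A}\mat{V}_{(k+1)}\mat{V}_{(k+1)}\ts + \mat{A}(\mat{I}-\mat{V}_{(k+1)}\mat{V}_{(k+1)}\ts)$. Because right-multiplying the second summand by $\mat{V}_{(k+1)}$ gives zero, the cross term in $\|\mat{A}-\mat{U}_{(k)}\mat{B}_k\mat{V}_{(k+1)}\ts\|_F^2$ vanishes, producing the orthogonal splitting $\|\mat{A}-\mat{U}_{(k)}\mat{B}_k\mat{V}_{(k+1)}\ts\|_F^2 = \|\mat{A}\mat{V}_{(k+1)}-\mat{U}_{(k)}\mat{B}_k\|_F^2 + \|\mat{A}\|_F^2 - \|\mat{A}\mat{V}_{(k+1)}\|_F^2$.

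The key algebraic observation is that $\mat{B}_{k+1}'$ has $\mat{B}_k$ as its top $kb$ rows and only $\mat{R}_{k+1}$ sitting in its last block row, so $\mat{U}_{(k+1)}\mat{B}_{k+1}' = \mat{U}_{(k)}\mat{B}_k + \mat{U}_{k+1}[0,\ldots,0,\mat{R}_{k+1}]$. Combined with \eqref{eqn:newgkrelation} at iteration $k+1$, the in-subspace residual becomes $\mat{A}\mat{V}_{(k+1)}-\mat{U}_{(k)}\mat{B}_k = \mat{U}_{k+1}[0,\ldots,0,\mat{R}_{k+1}] + \mat{D}_{k+1}$. Expanding both Frobenius squared norms in the Pythagorean identity with this substitution causes the two $\|\mat{D}_{k+1}\|_F^2$ contributions to cancel; using Lemma \ref{lemma:local} for the single-block product $\|\mat{U}_{k+1}\mat{R}_{k+1}\|_F^2 = (1+\theta_1)\|\mat{R}_{k+1}\|_F^2$ with $|\theta_1|\leq\eps_{k+1}$ and for the full bidiagonal product $\|\mat{U}_{(k+1)}\mat{B}_{k+1}'\|_F^2 = (1+\theta_2)\|\mat{B}_{k+1}'\|_F^2$ with $|\theta_2|\leq 2\eps_{k+1}$, together with the identity $\|\mat{B}_{k+1}'\|_F^2 = \|\mat{B}_k\|_F^2 + \|\mat{R}_{k+1}\|_F^2$, the leading copies of $\|\mat{R}_{k+1}\|_F^2$ cancel and one is left with $\text{err}-E = (\theta_1-\theta_2)\|\mat{R}_{k+1}\|_F^2 - \theta_2\|\mat{B}_k\|_F^2 - 2\,\trace(\mat{D}_{k+1}\ts\mat{U}_{(k)}\mat{B}_k)$.

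To close, I would bound the three correction terms. The first two combine to $3\eps_{k+1}\|\mat{R}_{k+1}\|_F^2 + 2\eps_{k+1}\|\mat{B}_k\|_F^2 = \eps_{k+1}\bigl(\|\mat{R}_{k+1}\|_F^2 + 2\|\mat{B}_{k+1}'\|_F^2\bigr) \leq 4\eps_{k+1}\|\mat{A}\|_F^2$, where the final step uses the bound $\|\mat{B}_{k+1}'\|_F \leq \|\mat{A}\|_F$ (up to $(1+\mathcal{O}(\eps_{k+1}))$ factors coming from Lemma \ref{lemma:local} and $\|\mat{D}_{k+1}\|_F \leq \delta\sqrt{d}$), which itself follows from \eqref{eqn:newgkrelation} at iteration $k+1$ and $\|\mat{A}\mat{V}_{(k+1)}\|_F \leq \|\mat{A}\|_F$. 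The trace term is handled by Cauchy--Schwarz: $2|\trace(\mat{D}_{k+1}\ts\mat{U}_{(k)}\mat{B}_k)| \leq 2\delta\sqrt{d}\cdot\sqrt{1+2\eps_{k+1}}\,\|\mat{B}_k\|_F \leq 2\delta\sqrt{d}(1+2\eps_{k+1})\|\mat{A}\|_F$, using Lemma \ref{lemma:local} to pass from $\|\mat{U}_{(k)}\mat{B}_k\|_F$ to $\|\mat{B}_k\|_F$ and then $\|\mat{B}_k\|_F \leq \|\mat{A}\|_F$ (valid whenever $E \geq 0$).

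The main obstacle is the ``ghost'' $\|\mat{R}_{k+1}\|_F^2$ contribution that appears naturally in the expansion of $\text{err}$ yet is not captured by the estimator $E$, which was computed through iteration $k$. The resolution is the leading-order cancellation described above: because the same $\|\mat{R}_{k+1}\|_F^2$ appears with coefficient $1+\theta_1$ from $\|\mat{U}_{k+1}\mat{R}_{k+1}\|_F^2$ and with coefficient $1+\theta_2$ from the decomposition $\|\mat{B}_{k+1}'\|_F^2 = \|\mat{B}_k\|_F^2 + \|\mat{R}_{k+1}\|_F^2$, only the difference $\theta_1-\theta_2 = \mathcal{O}(\eps_{k+1})$ survives. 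Keeping this cancellation tight while simultaneously ensuring that the deflation contribution appears only at first order in $\delta\sqrt{d}$ and multiplied by $\|\mat{A}\|_F$ is the most delicate part of the bookkeeping.
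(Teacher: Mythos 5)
Your proof is correct, and its first move—pulling out $\|\mat{A}\|_F^2$ using the orthonormality of $\mat{V}_{(k+1)}$, substituting \eqref{eqn:newgkrelation} at step $k+1$, and finishing with Lemma~\ref{lemma:local}—is the same strategy as the paper's. The difference lies in the bookkeeping. The paper expands $\|\mat{A}-\mat{U}_{(k)}\mat{B}_k\mat{V}_{(k+1)}\ts\|_F^2 = \|\mat{A}\|_F^2 + \|\mat{U}_{(k)}\mat{B}_k\|_F^2 - 2\trace(\mat{A}\mat{V}_{(k+1)}\mat{B}_k\ts\mat{U}_{(k)}\ts)$ and substitutes \eqref{eqn:newgkrelation} inside the trace; the cross contribution then shows up as the single scalar $\trace(\mat{U}_k\ts\mat{U}_{k+1}\mat{R}_{k+1}\mat{L}_{k+1}\ts)$, which is already $\mathcal{O}(\eps_{k+1})$ because of the factor $\|\mat{U}_k\ts\mat{U}_{k+1}\|_2$, so Lemma~\ref{lemma:local} need only be applied once, to $\|\mat{U}_{(k)}\mat{B}_k\|_F^2$. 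You instead write a Pythagorean splitting $\|\mat{A}\mat{V}_{(k+1)}-\mat{U}_{(k)}\mat{B}_k\|_F^2 + \|\mat{A}\|_F^2 - \|\mat{A}\mat{V}_{(k+1)}\|_F^2$ and substitute into both squared norms, which produces a leading-order $\|\mat{R}_{k+1}\|_F^2$ in each and forces you to apply Lemma~\ref{lemma:local} twice (to $\|\mat{U}_{k+1}\mat{R}_{k+1}\|_F^2$ and to $\|\mat{U}_{(k+1)}\mat{B}_{k+1}'\|_F^2$) and then invoke a cancellation so that only $(\theta_1-\theta_2)\|\mat{R}_{k+1}\|_F^2$ survives. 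The two intermediate identities are in fact algebraically equivalent, since $\|\mat{U}_{(k+1)}\mat{B}_{k+1}'\|_F^2 = \|\mat{U}_{(k)}\mat{B}_k\|_F^2 + \|\mat{U}_{k+1}\mat{R}_{k+1}\|_F^2 + 2\trace(\mat{U}_k\ts\mat{U}_{k+1}\mat{R}_{k+1}\mat{L}_{k+1}\ts)$, so your cancellation is precisely a re-derivation of the paper's smallness of the cross-trace. The paper's route is a bit more direct because it never creates the ghost $\|\mat{R}_{k+1}\|_F^2$ terms in the first place; your route uses the slack $3\|\mat{A}\|_F^2 \le 4\|\mat{A}\|_F^2$ to absorb the extra $(1+\mathcal{O}(\eps_{k+1}))$ corrections, which is sound but slightly less tight. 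Both arguments share the same implicit approximations ($\|\mat{B}_k\|_F \le \|\mat{A}\|_F$, $\|\mat{R}_{k+1}\|_F\|\mat{L}_{k+1}\|_F \le \|\mat{A}\|_F^2$) that the paper justifies by its earlier simplifying assumptions, so the level of rigor is comparable.
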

\begin{proof}
	 First, by assuming the columns of $\mat{V}_{(k+1)}$ are orthonormal we find that 
	\begin{equation} \label{eqn:errBound1}
		\|\mat{A}-\mat{U}_{(k)}\mat{B}_k\mat{V}_{(k+1)}\ts \|_F^2 
		=
		\|\mat{A}\|_F^2 + \|\mat{U}_{(k)}\mat{B}_k\|_F^2 - 2\trace (\mat{A}\mat{V}_{(k+1)}\mat{B}_k\ts \mat{U}_{(k)}\ts).
	\end{equation}
	By assuming that \eqref{eqn:newgkrelation} holds exactly at each step, we also get the identity
	\[
		\mat{A}\mat{V}_{(k+1)} = \mat{U}_{(k+1)}\mat{B}_{k+1}' + \mat{D}_{k+1} = \mat{U}_{(k)}\mat{B}_{k} + [\mat{0},\mat{U}_{k+1}\mat{R}_{k+1}] + \mat{D}_{k+1},
	\]
	where $\|\mat{D}_{k+1}\|_F \leq \delta \sqrt{d}$. It follows that
	\begin{equation}\label{eqn:errPart1}
		\trace (\mat{A}\mat{V}_{(k+1)}\mat{B}_k\ts \mat{U}_{(k)}\ts)
		=
		\|\mat{U}_{(k)}\mat{B}_k\|_F^2 + \trace(\mat{U}_k\ts \mat{U}_{k+1}\mat{R}_{k+1}\mat{L}_{k+1}\ts) + \trace(\mat{D}_{k+1}\mat{B}_k\ts \mat{U}_{(k)}\ts).
	\end{equation}
	From the definition of $\eps_{k+1}$ we have
	\begin{equation} \label{eqn:errPart2}
				\left|\trace(\mat{U}_k\ts \mat{U}_{k+1}\mat{R}_{k+1}\mat{L}_{k+1}\ts)\right| \leq \|\mat{U}_k\ts \mat{U}_{k+1}\|_2\|\mat{R}_{k+1}\|_F\|\mat{L}_{k+1}\|_F \leq \eps_{k+1}\|\mat{A}\|_F^2, 
	\end{equation}
	and since $\|\mat{D}_{k+1}\|_F \leq \delta\sqrt{d}$ we also have
	\begin{equation} \label{eqn:errPart3}
		\left|\trace (\mat{D}_{k+1} \mat{B}_k\ts \mat{U}_{(k)}\ts)\right| \leq \|\mat{D}_{k+1}\|_F\|\mat{U}_{(k)}\mat{B}_k\|_F \leq \delta\sqrt{d}\|\mat{U}_{(k)}\mat{B}_k\|_F.	
	\end{equation}

	Lemma \ref{lemma:local} gives us bounds on $\|\mat{U}_{(k)}\mat{B}_k\|_F^2$, so by returning to \eqref{eqn:errBound1} and using \eqref{eqn:errPart1}, \eqref{eqn:errPart2}, and \eqref{eqn:errPart3}, we conclude that 
	\begin{align*}
		\|\mat{A}-\mat{U}_{(k)}\mat{B}_k\mat{V}_{(k+1)}\ts \|_F^2 
		&=
		\|\mat{A}\|_F^2 + \|\mat{U}_{(k)}\mat{B}_k\|_F^2 - 2\trace (\mat{A}\mat{V}_{(k+1)}\mat{B}_k\ts \mat{U}_{(k)}\ts)\\
		&\leq \|\mat{A}\|_F^2 - \|\mat{U}_{(k)}\mat{B}_k\|_F^2 + 2\eps_{k+1}\|\mat{A}\|_F^2 + 2\delta\sqrt{d}\|\mat{U}_{(k)}\mat{B}_k\|_F\\
		&\leq E + 4\eps_{k+1}\|\mat{A}\|_F^2 + 2\delta\sqrt{d}(1+2\eps_{k+1})\|\mat{A}\|_F. 
	\end{align*}

\end{proof}

Thus as long as {\it local} orthogonality is maintained for $\mat{U}_{(k)}$ and as long as the number of deflations is not too large, we can expect $E$ to remain an accurate estimate of the Frobenius norm approximation error, at least when the error tolerance is not too small.

\subsection{Postprocessing of $\mat{B}$}
Recall that our original goal for the fixed-accuracy problem was not just to find a factorization that satisfies the bound $\|\mat{A}-\mat{UBV}\ts\|_F < \tau$, but to find the factorization with the smallest rank that does so. In order to accomplish this, we may compute the SVD of $\mat{B}$ as $\mat{B} = \hat{\mat{U}}\mat{\Sigma}\hat{\mat{V}}\ts$, truncate it to the smallest rank $r$ such that $\|\mat{A}-\hat{\mat{U}}_r\mat{\Sigma}_r\hat{\mat{V}}_r\ts\|_F < \tau$, then approximate the left and right singular vectors of $\mat{A}$ by $\mat{U}\hat{\mat{U}}_r$ and $\mat{V}\hat{\mat{V}}_r$. It should be noted that since $\mat{B}$ is a block bidiagonal matrix, its SVD can in theory be computed more efficiently than if $\mat{B}$ were dense. Algorithms for computing the SVD typically first reduce the matrix to bidiagonal form \cite{golub1965calculating}, and $\mat{B}$ can be efficiently reduced to this form using band reduction techniques as in \cite{kaufman2000band}. 

This postprocessing step takes on additional importance when dealing with the block Lanczos method rather than subspace iteration. Where subspace iteration will yield a matrix $\mat{B}$ whose singular values are all decent approximations of the top singular values of $\mat{A}$, the factor $\mat{B}$ produced by the Lanczos method will contain approximations to the {\it smallest} singular values of $\mat{A}$ as well \cite{golub2013matrix}. It is therefore possible that the matrix $\mat{B}$ produced by \texttt{randUBV} can be truncated significantly without diminishing the quality of the approximation. 

In fact, if one has the goal of obtaining a factorization whose rank is as small as possible, we recommend setting the stopping tolerance $\tau_{\text{stop}}$ slightly smaller than the desired approximation tolerance $\tau_{\text{err}}$ (or similarly, running the algorithm for a few more iterations after the approximation tolerance has already been satisfied). Doing so will may significantly reduce the rank $r$ of the truncated SVD, which will in turn pay dividends by reducing the cost of computing $\mat{U}\hat{\mat{U}}_r$ and $\mat{V}\hat{\mat{V}}_r$.

\section{Numerical experiments}\label{sec:experiments}
Here we report the results of numerical experiments on synthetic and real test cases. We run four sets of experiments in order to examine the following: 
\begin{enumerate}
	\item The rate of convergence by iteration. We use synthetic matrices whose spectra decay at different rates, and compare \texttt{randUBV} with \texttt{randQB\_EI} using power iterations $p=0,1,2$ for the latter. 
	\item The effect of sparsity and truncation rank on reorthogonalization costs. 
	\item The effect of block size on the time and number of iterations required for convergence. 
	\item The effect of choosing a smaller stopping tolerance $\tau_{\text{stop}}<\tau_{\text{err}}$ on the quality of the approximation. 
\end{enumerate}
All experiments were carried out in MATLAB 2020b on a 4-core Intel Core 7 with 32GB RAM. 

\begin{figure}[H]
	\begin{subfigure}{\textwidth}
		\centering
		\begin{minipage}{0.45\textwidth}
			\centering
			\includegraphics[width=\textwidth]{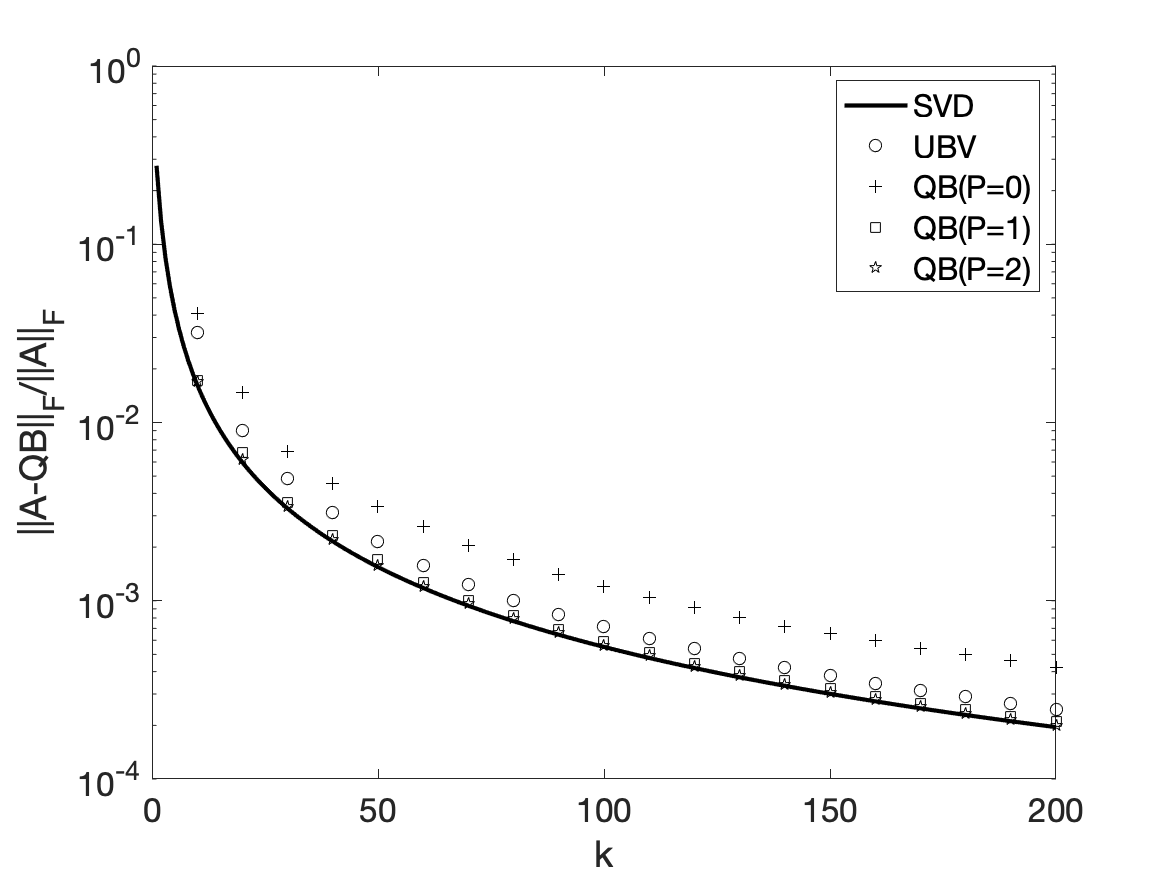}
		\end{minipage}
		\begin{minipage}{0.45\textwidth}
			\centering
			\includegraphics[width=\textwidth]{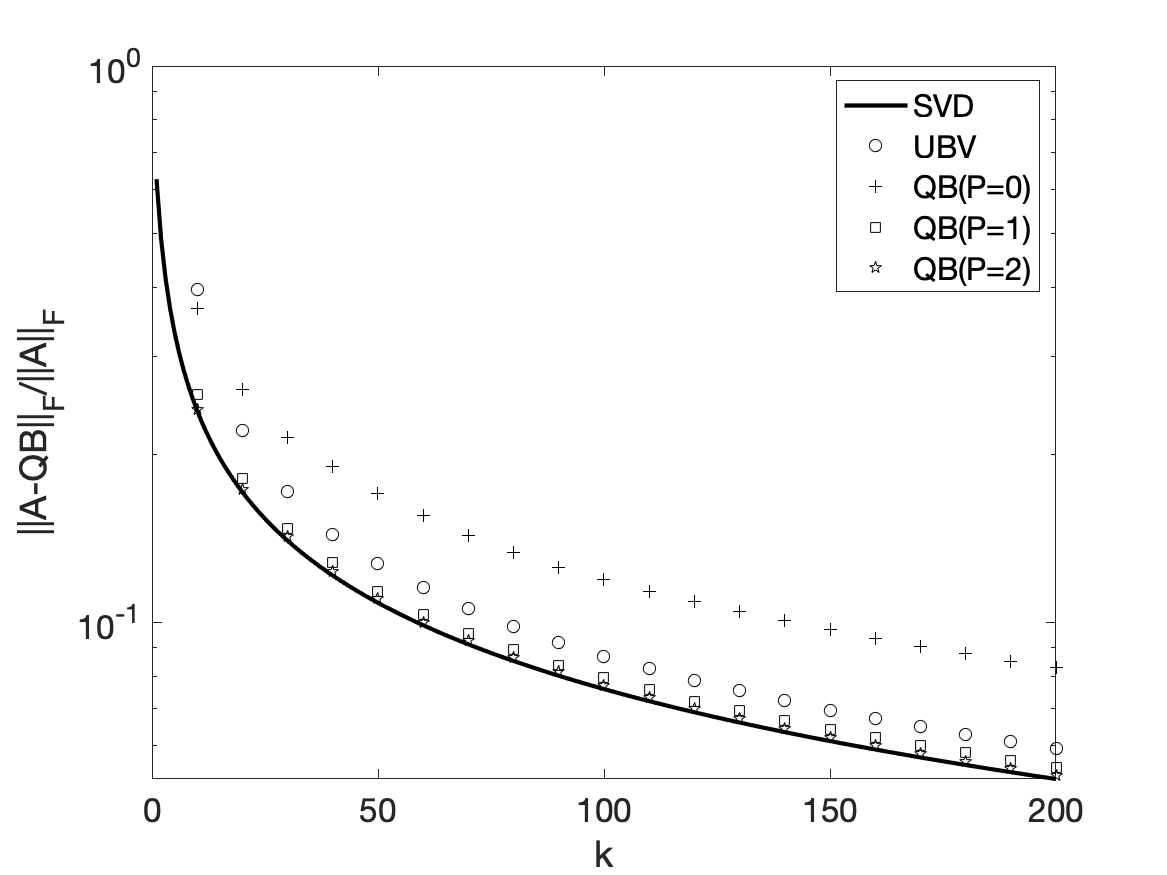}
		\end{minipage}
		\caption{Left: slow decay. Right: very slow decay. }
	\end{subfigure}
	\begin{subfigure}{\textwidth}
		\centering
		\begin{minipage}{0.45\textwidth}
			\centering
			\includegraphics[width=\textwidth]{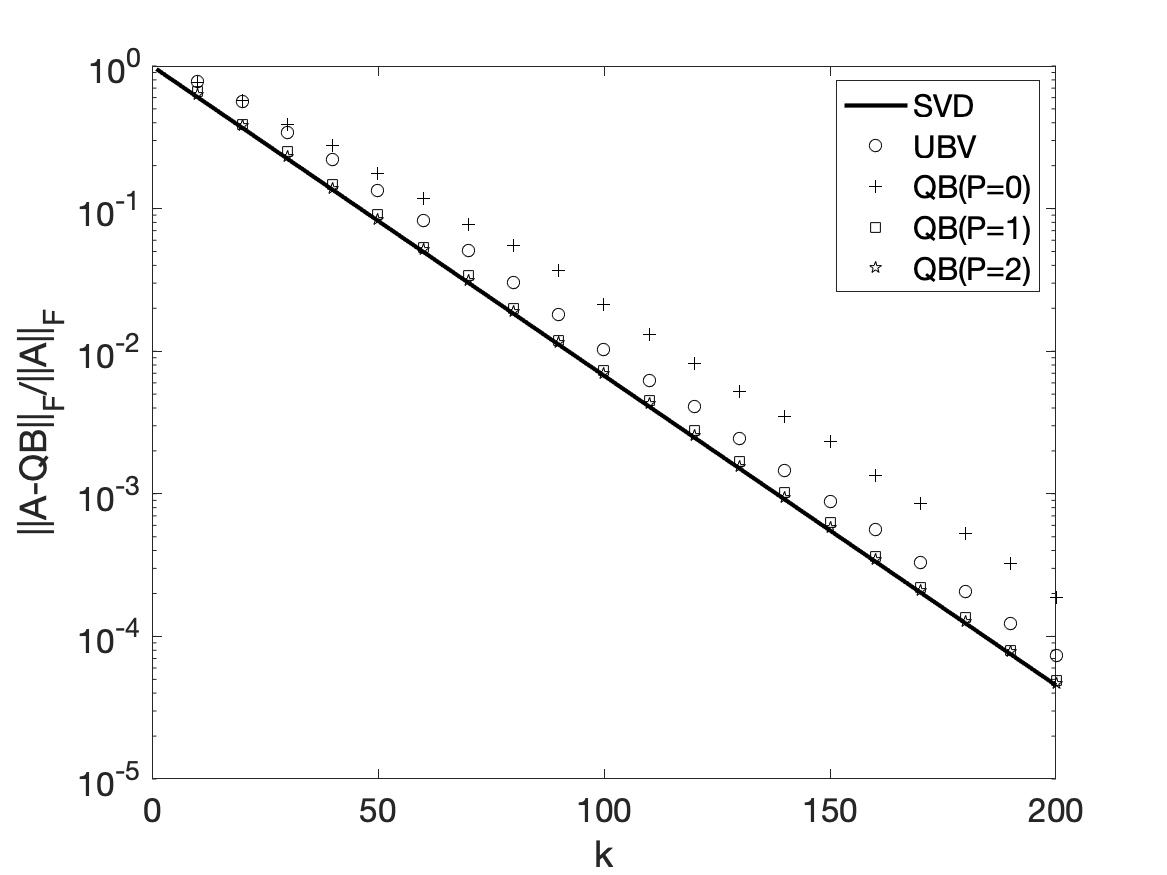}
		\end{minipage}
		\begin{minipage}{0.45\textwidth}
			\centering
			\includegraphics[width=\textwidth]{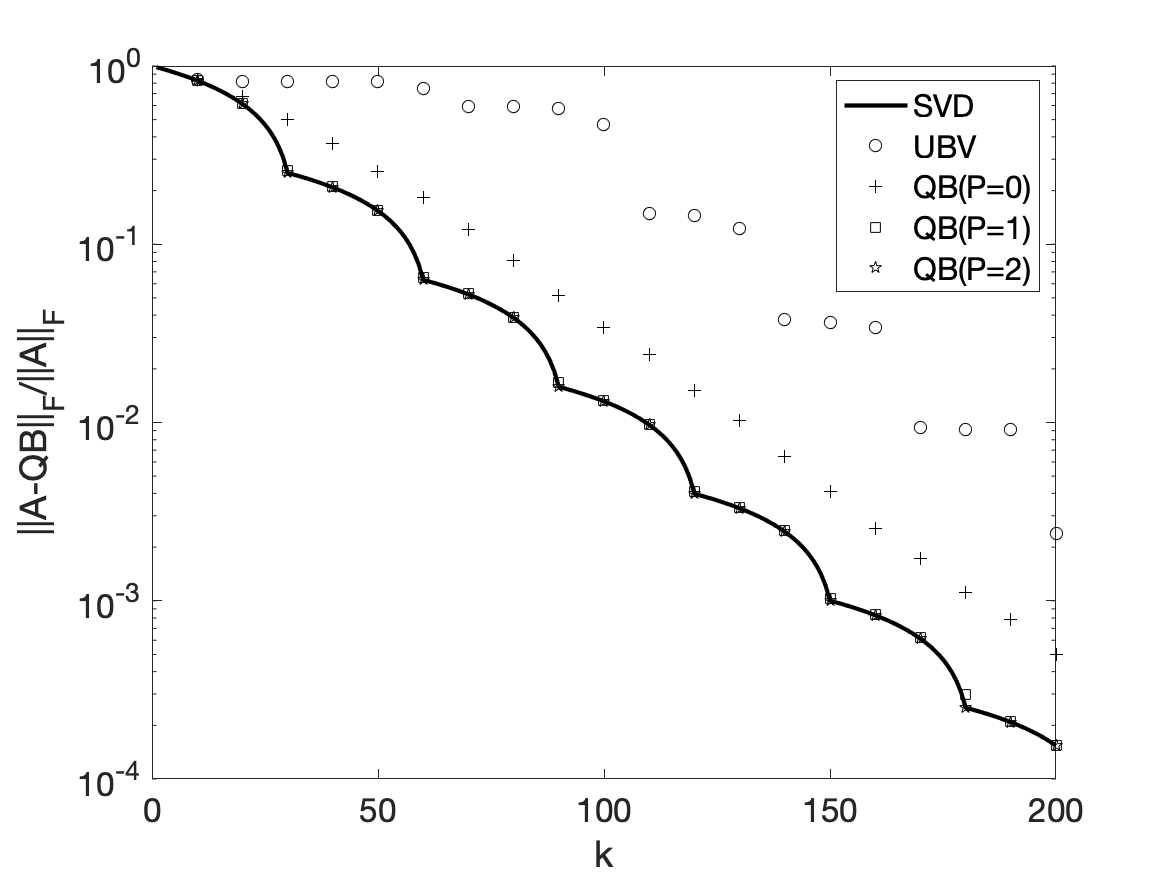}
		\end{minipage}
		\caption{Left: fast decay. Right: singular values have multiplicity greater than the block size.  }
	\end{subfigure}
	\caption{Convergence rate by iteration. In all cases but the last, \texttt{randUBV} requires fewer iterations for convergence than \texttt{randQB\_EI} with $p=0$ but more than \texttt{randQB\_EI} with $p=1$.}
	\label{fig:accTest}
\end{figure}

\subsection{Convergence rate by iteration}
For our first set of test cases we created matrices of size $2000\times 2000$ with the form $\mat{A} = \mat{U\Sigma V}\ts$, where $\mat{U}$ and $\mat{V}$ were formed by orthogonalizing standard Gaussian matrices and $\mat{\Sigma}$ was set in the following manner: 
\begin{itemize}
	\item (Matrix 1) Slow decay, in which $\sigma_j = 1/j^2$ for $1\leq j \leq 2000$. 
	\item (Matrix 2) Very slow decay, in which $\sigma_j = 1/j$ for $1\leq j \leq 2000$. 
	\item (Matrix 3) Fast decay, in which $\sigma_j = \exp(-j/20)$ for $1\leq j \leq 2000$. 
	\item (Matrix 4) Step function decay, in which $\sigma_j = 10^{-0.6(\lceil j/30\rceil - 1)}$ for $1\leq j \leq 2000$. Each singular value of $\mat{A}$ (except for the smallest) has multiplicity 30. 
\end{itemize}
In all four cases, we ran the sketching algorithms to a maximum rank $k= 200$ using block size $b=10$. The deflation tolerance was set at $\delta = 10^{-12}\sqrt{\|\mat{A}\|_1\|\mat{A}\|_\infty}$, but we did not encounter deflation in any of these cases.

Results are shown in Figure \ref{fig:accTest}. In the first three test cases, the approximation error for \texttt{randUBV} was smaller than that of \texttt{randQB\_EI} (with power parameter $p=0$) for every iteration after the first. It lagged somewhat behind \texttt{randQB\_EI} with $p=1$ or $p=2$, both of which were quite close to optimal. In the final case, where the singular values of $\mat{A}$ were chosen to have multiplicity larger than the block size, \texttt{randUBV} lagged significantly behind even \texttt{randQB\_EI} with $p=0$. We note that algorithm \texttt{randUBV} did nonethless converge, which would not have been possible in exact arithmetic. 

Finally, we offer a snapshot of the singular values of $\mat{B}_{200}$ after the algorithms have terminated. Results for test cases 1 and 4 are shown in Figure \ref{fig:svalB}. We note that the leading singular values returned by \texttt{randUBV} are more accurate than those returned by \texttt{randQB\_EI} with $p=0$ and comparable to the cases $p=1$ or $p=2$. The smallest singular values for \texttt{randUBV} are much smaller than their \texttt{randQB} counterparts, which appears to be undesirable but has a bit of a silver lining: it suggests that the rank of $\mat{B}_k$ can be truncated without losing much approximation accuracy.

\begin{figure}[H]
	\centering
	\begin{minipage}{0.45\textwidth}
		\includegraphics[width=\textwidth]{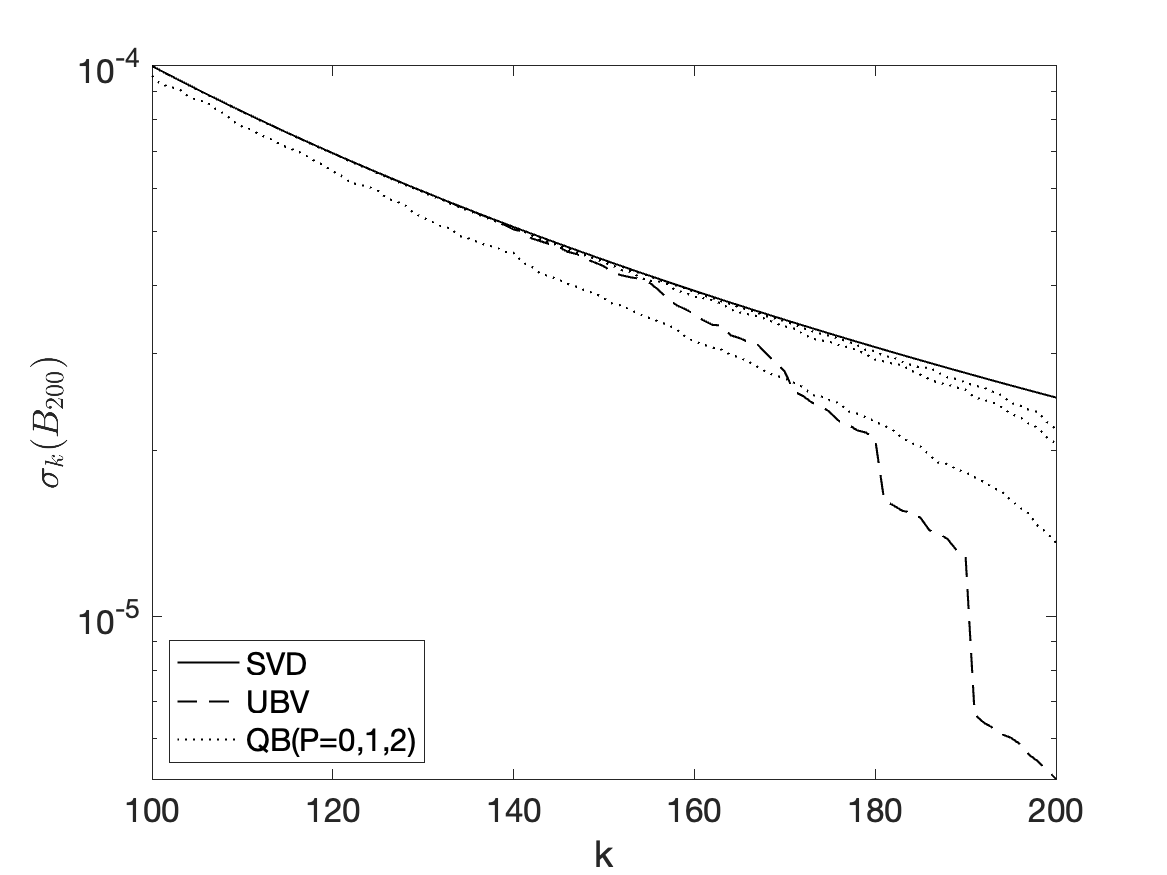}
	\end{minipage}
	\begin{minipage}{0.45\textwidth}
		\includegraphics[width=\textwidth]{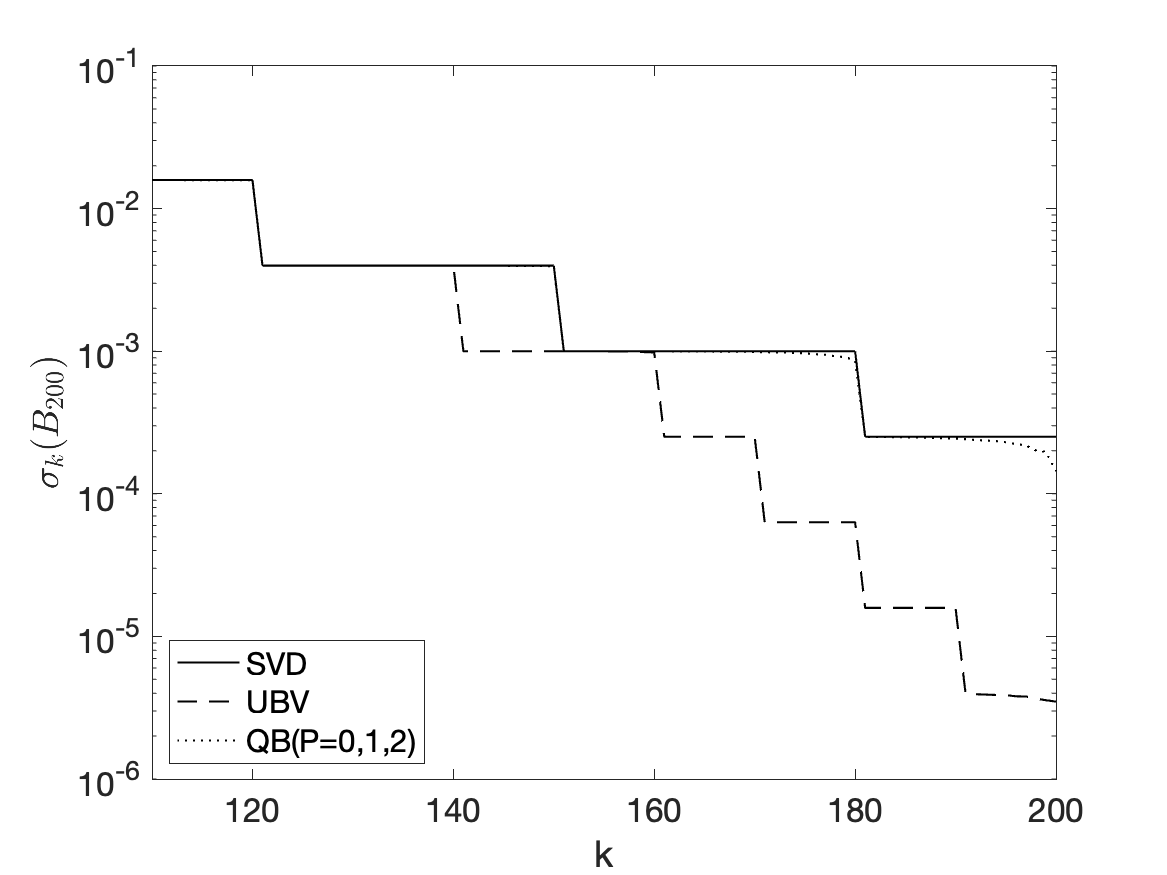}
	\end{minipage}
	\caption{Singular values of $\mat{B}_k$ after termination. Left: slow decay. Right: step function decay.}
	\label{fig:svalB}
\end{figure}

\subsection{Reorthogonalization costs}

For our second set of test cases we generated random sparse matrices as \texttt{A = sprand(m,n,d)} with $n=4000$ columns and varying numbers of rows $m$ and densities $d$. We then approximated \texttt{A} to a variable rank $k$ using \texttt{randUBV} and \texttt{randQB\_EI} with $p=0$. We tested three different variations: 
\begin{itemize}
	\item Number of rows $m$ varying from $8000$ to $40000$, rank $k=600$, and $d = 0.8\%$ nonzeros. 
	\item Number of rows $m=24000$, rank $k$ varying from $200$ to $1000$, and $d = 0.8\%$ nonzeros. 
	\item Number of rows $m=24000$, rank $k=600$, and nonzeros varying from $d=0.4\%$ to $d=2\%$.
\end{itemize}

\begin{figure}[H]
	\centering
	\begin{minipage}{0.45\textwidth}
		\includegraphics[width=\textwidth]{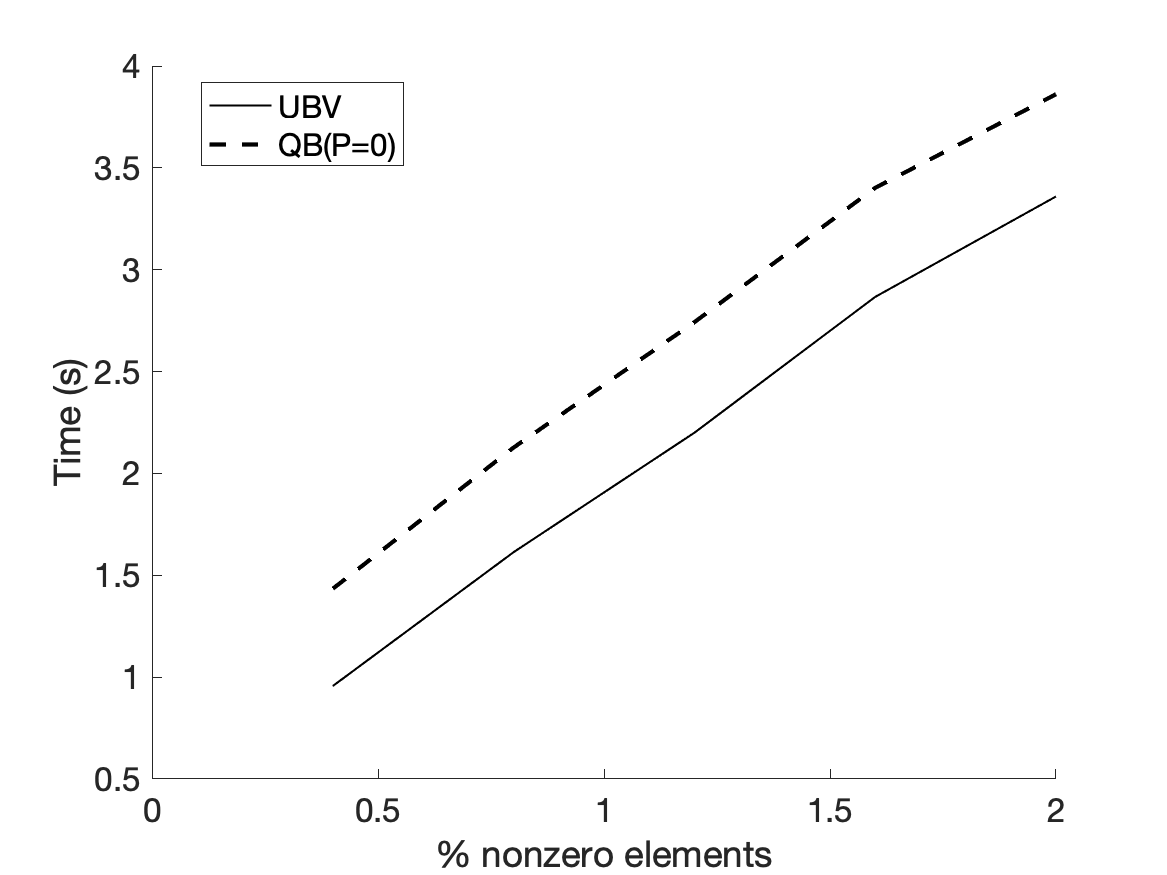}
	\end{minipage}
	\begin{minipage}{0.45\textwidth}
		\includegraphics[width=\textwidth]{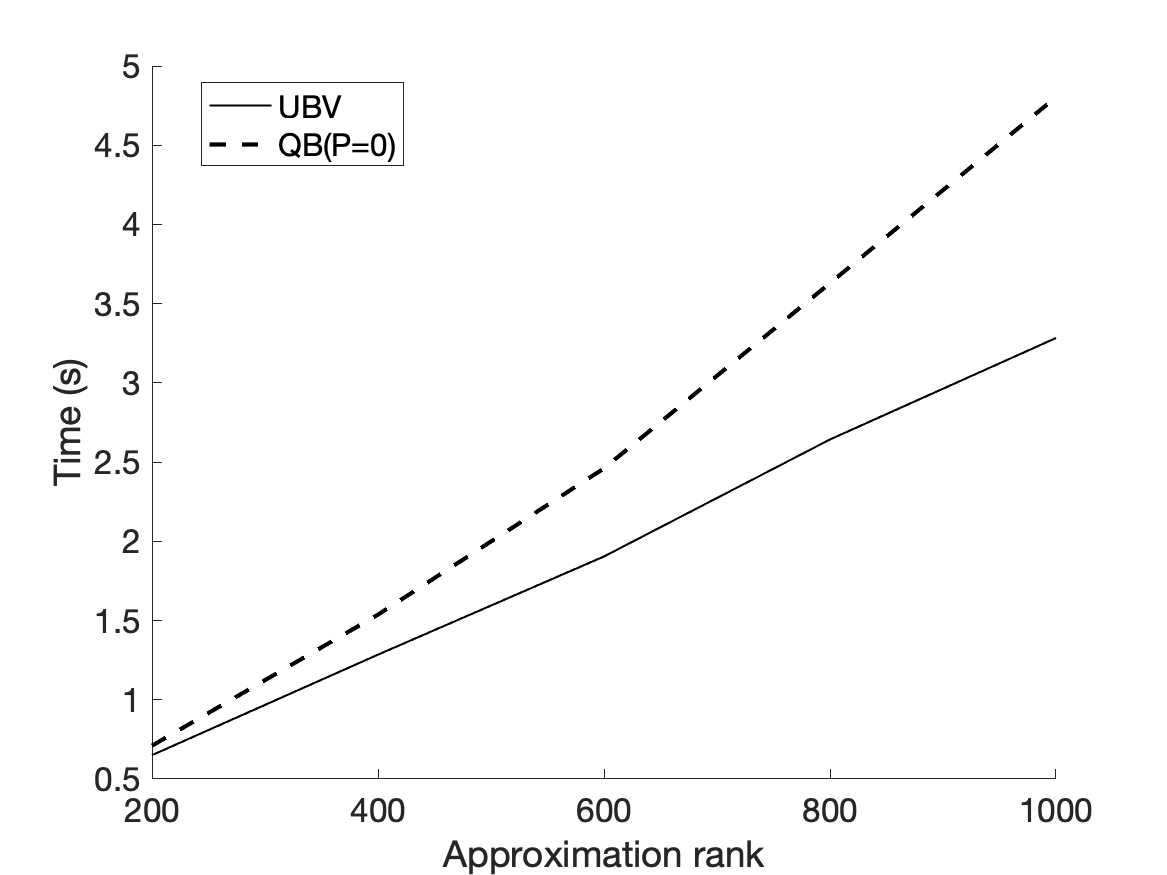}
	\end{minipage}
	\caption{Effects of sparsity (left) and approximation rank (right) on run time. }
	\label{fig:orthogonalize}
\end{figure}

Results for the second and third cases are shown in Figure \ref{fig:orthogonalize}, which confirm our general expectations: for a rectangular matrix with $m > n$, if the matrix is sparse or the approximation rank large then reorthogonalization will take up a larger proportion of the overall cost. Consequently, \texttt{randUBV} will gain a competitive advantage over \texttt{randQB\_EI} due to the fact that it uses one-sided reorthogonalization. This effect will be more pronounced the larger $m$ is compared to $n$, although we found that changing $m$ alone did not have much effect on the relative runtimes of the two algorithms.

\subsection{Block size}
For our third set of test cases, we examine how the choice of block size affects the time and number of iterations required for convergence. We use one synthetic matrix and two real ones: the synthetic matrix is a $4000\times 4000$ matrix whose singular values decrease according to the step function $\sigma_j = 10^{-0.1(\lceil j/30\rceil - 1)}$. Thus each singular value except for the last has multiplicity 30. 

The first real matrix is a dense $3168\times 4752$ matrix, representing the grayscale image of a spruce pine. The second, \texttt{lp\_cre\_b}, comes from a linear programming problem from the SuiteSparse collection \cite{suiteSparse}, and is a $9648\times 77137$ sparse matrix with $260,785$ nonzero elements and at most 9 nonzero elements per column. This second matrix has several sizeable clusters of singular values: for example, $\sigma_{268} \approx 71.10$ and $\sigma_{383} \approx 70.77$. The median relative gap $(\sigma_k-\sigma_{k+1})/\sigma_{k+1}$ among the first 800 singular values is about $8.6\times 10^{-5}$, and the smallest relative gap is about $2.3\times 10^{-8}$. Prior to running the sketching algorithms, both matrices were transposed in order to have more rows than columns. 

\begin{figure}[H]
	\centering
	\begin{minipage}{0.45\textwidth}
		\includegraphics[width=\textwidth]{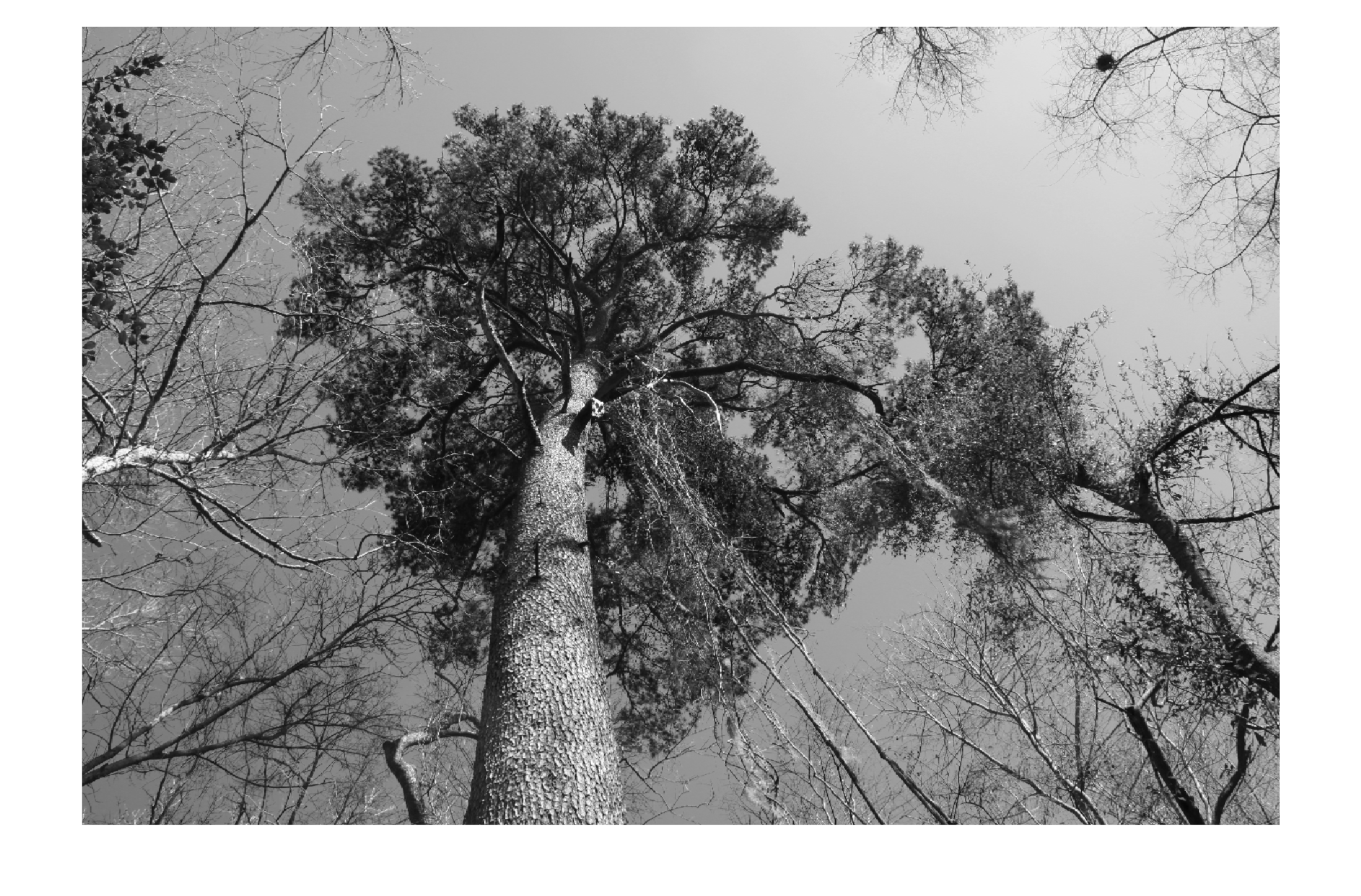}
	\end{minipage}
	\begin{minipage}{0.45\textwidth}
		\includegraphics[width=\textwidth]{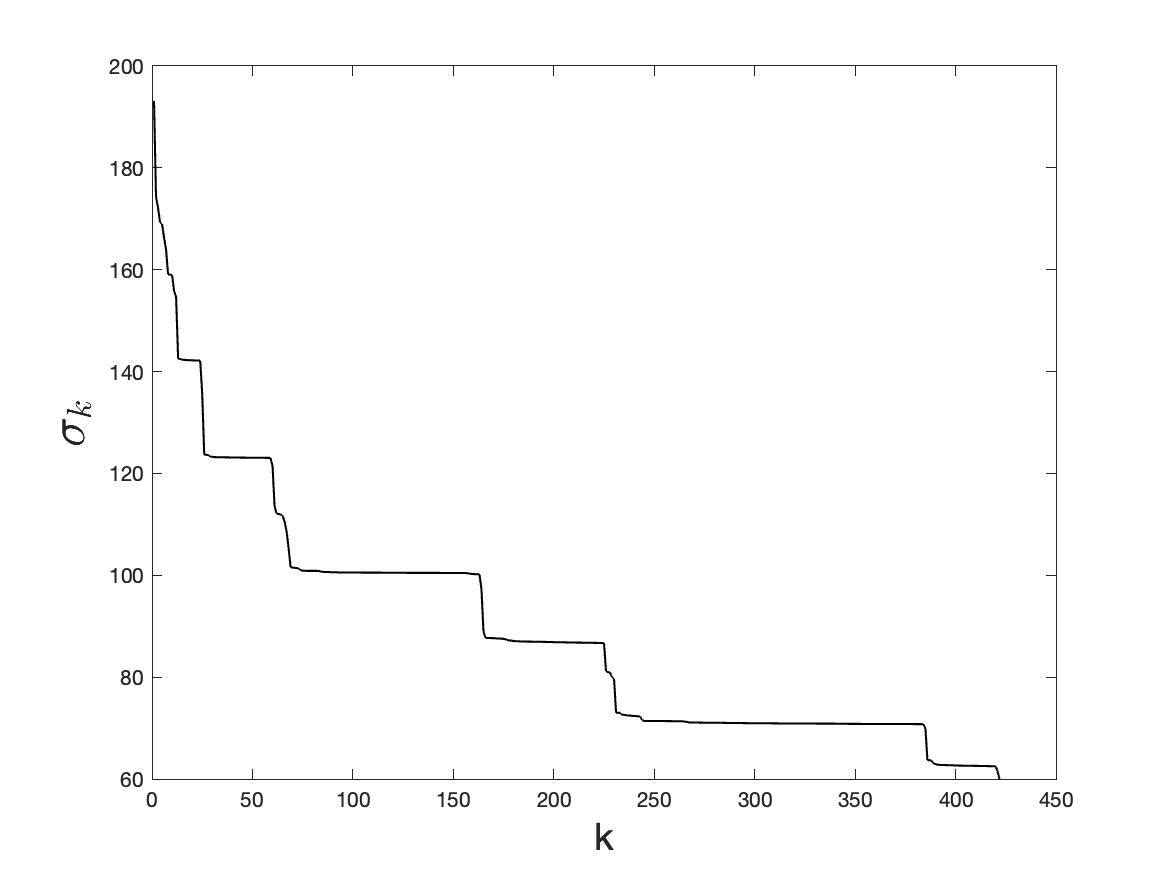}
	\end{minipage}
	\caption{Left: image of {\it pinus glabra}. Right: leading singular values of \texttt{lp\_cre\_b}.}
\end{figure}

We compare \texttt{randUBV} to \texttt{randQB\_EI} with power parameter $p=1$. For both algorithms we approximate the synthetic matrix to a relative error $\tau_{\text{err}} = 0.01$, the grayscale image to a relative error $\tau_{\text{err}}=0.1$, and the SuiteSparse matrix to a relative error $\tau_{\text{err}} = 0.5$. 

Results are shown in Figure \ref{fig:blockTest}. The behavior of \texttt{randQB\_EI} was fairly straightforward: using larger block sizes was more efficient, at least up to the point where the block size was large enough to waste computation by computing $\mat{Q}$ and $\mat{B}$ to a larger rank than necessary. This makes sense because larger block sizes offer more opportunities for using BLAS 3 operations and parallelization. Relatedly, we note that MATLAB's \texttt{svdsketch} function adaptively increases the block size in order to accelerate convergence. 

The behavior of \texttt{randUBV} was very similar to that of \texttt{randQB\_EI} on the grayscale image, but less so on the other two cases.  For the synthetic matrix whose singular values were distributed according to a step function, increasing $b$ from just below the cluster size to just above it led to a sharp drop in both the time and number of iterations required. On the matrix \texttt{lp\_cre\_b}, the optimal block size was near $b=10$ even though the approximation rank was close to constant over all block sizes tested. We speculate that the reason for this is that \texttt{lp\_cre\_b} is both sparse and rectangular, so dense QR operations are a significant portion of the cost of the algorithm. Looking back to the cost of \texttt{randUBV} as shown in \eqref{cost:randubv}, we note that using a smaller block size reduces the cost of performing QR operations on $\mat{U}$. 
%

\begin{figure}[H]
	\begin{subfigure}{\textwidth}
		\centering
		\begin{minipage}{0.45\textwidth}
			\centering
			\includegraphics[width=\textwidth]{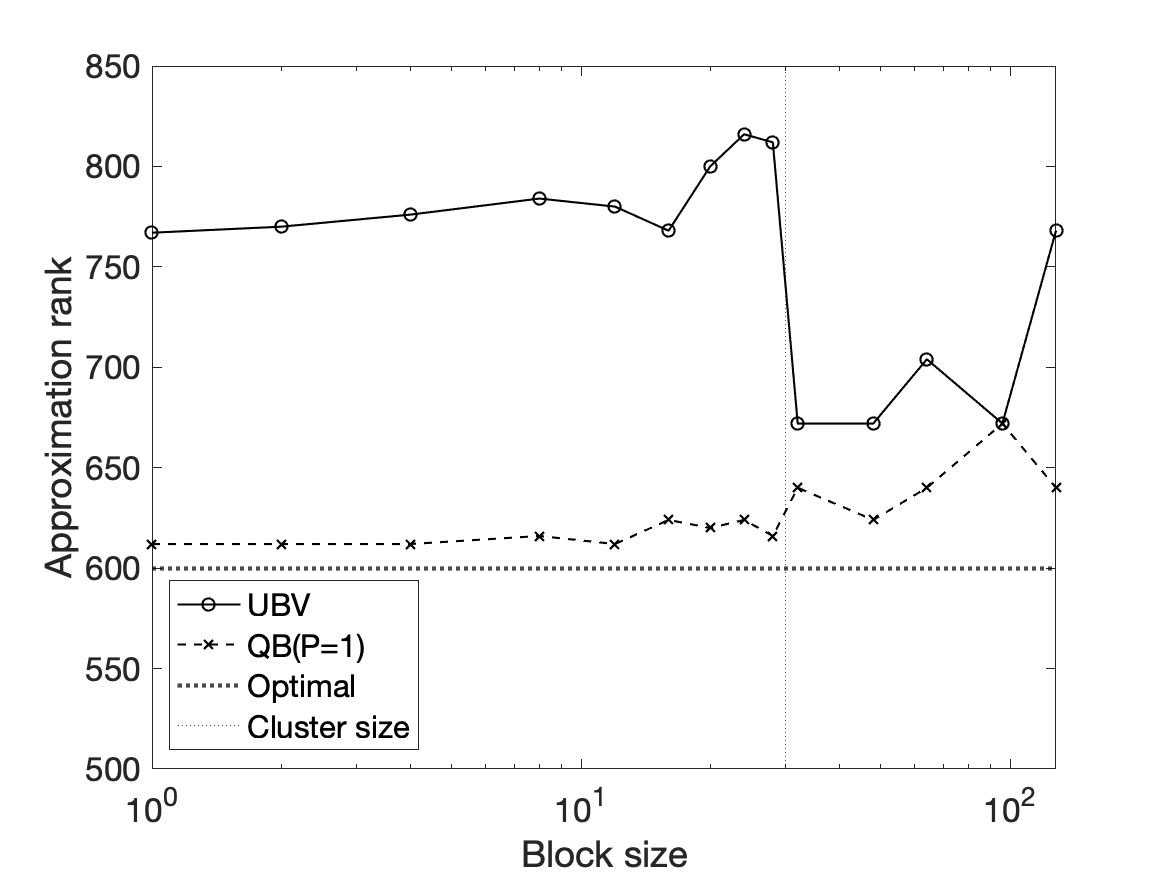}
		\end{minipage}
		\begin{minipage}{0.45\textwidth}
			\centering
			\includegraphics[width=\textwidth]{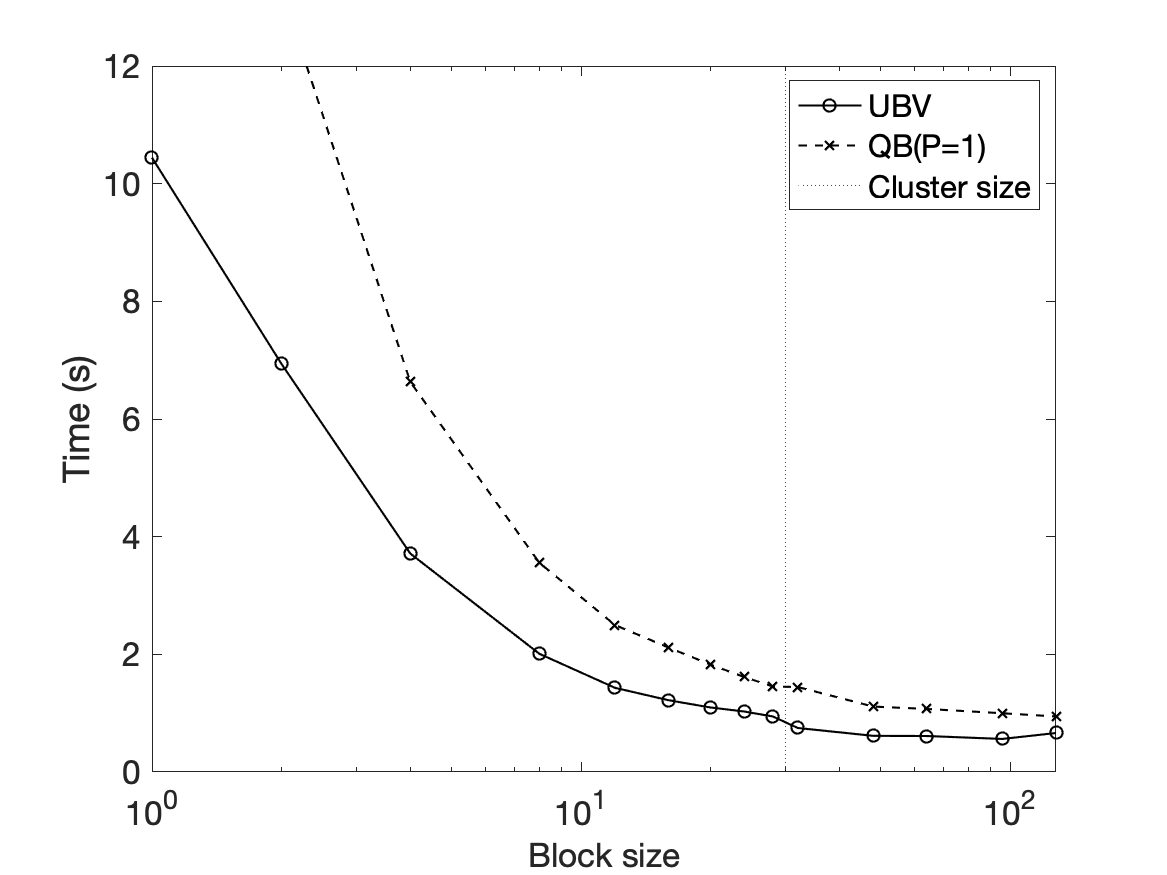}
		\end{minipage}
		\caption{Step function decay. }
	\end{subfigure}
	\begin{subfigure}{\textwidth}
		\centering
		\begin{minipage}{0.45\textwidth}
			\centering
			\includegraphics[width=\textwidth]{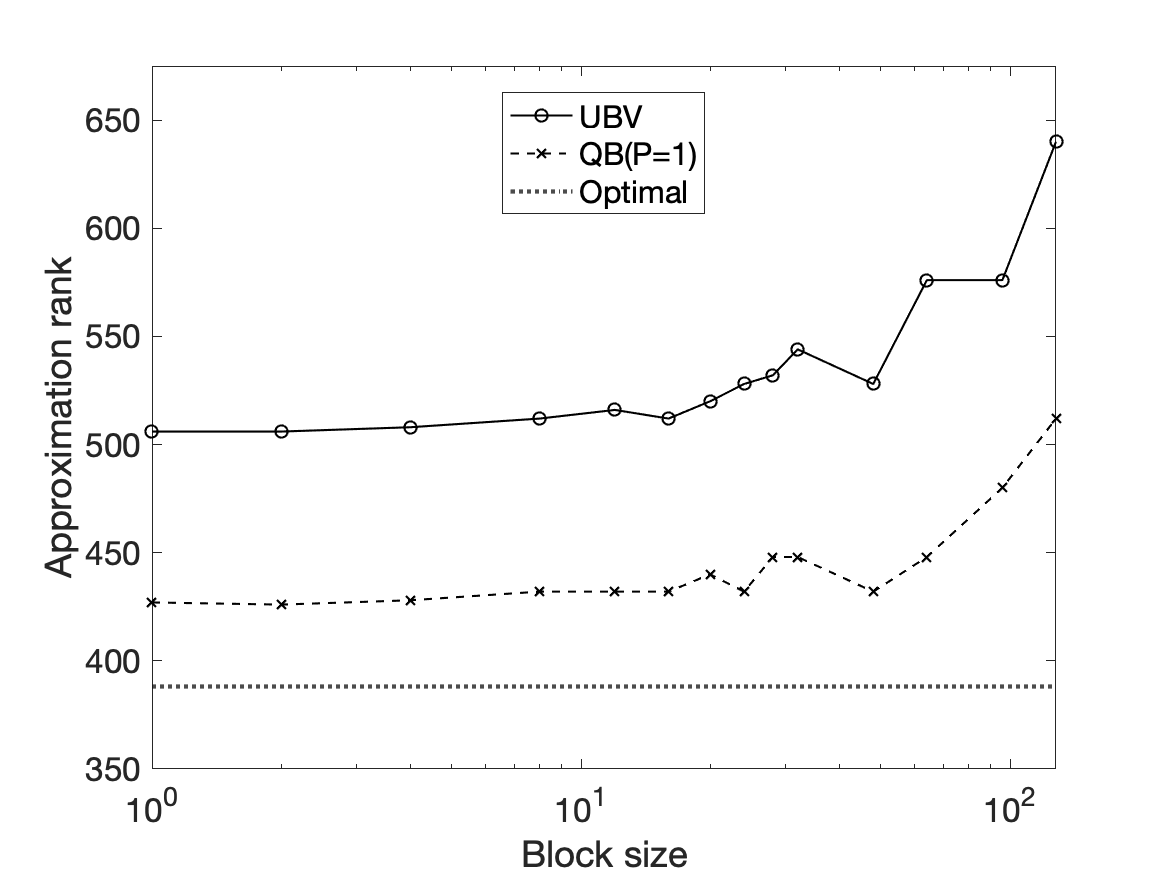}
		\end{minipage}
		\begin{minipage}{0.45\textwidth}
			\centering
			\includegraphics[width=\textwidth]{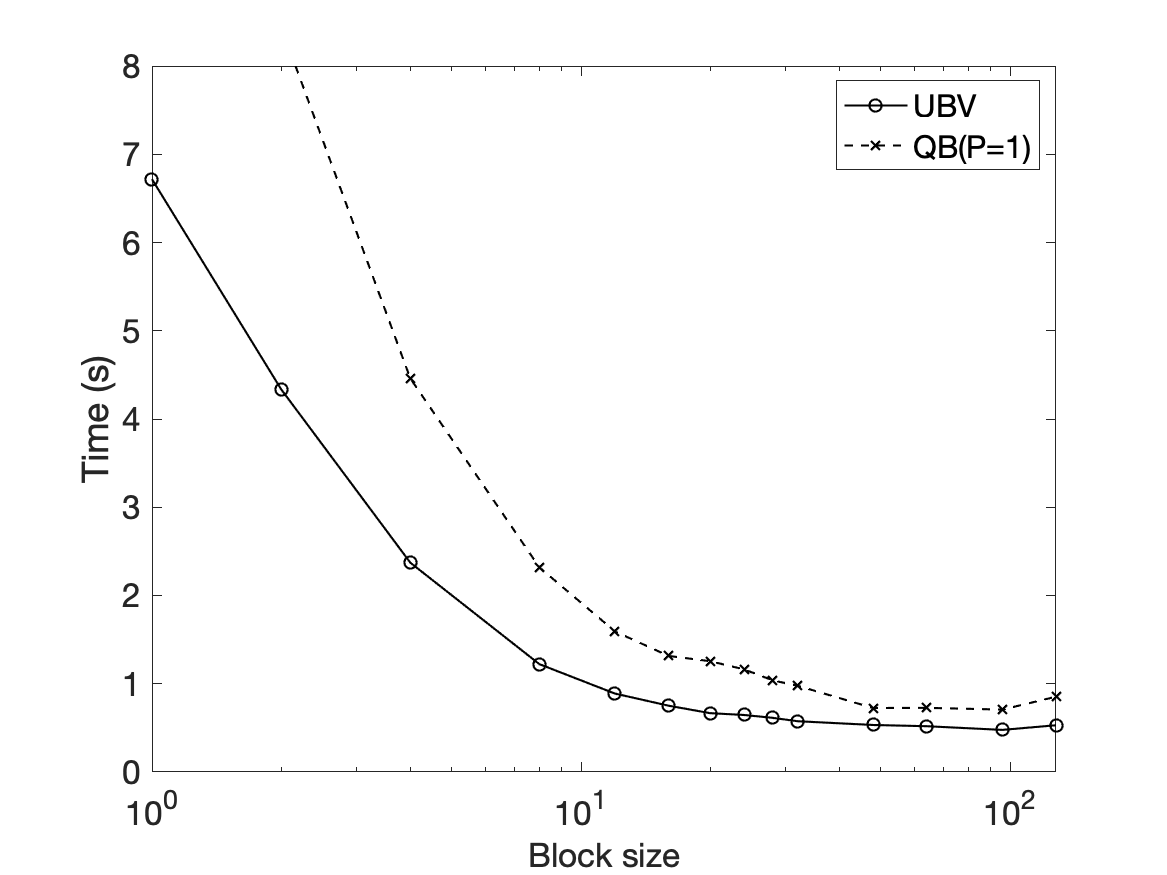}
		\end{minipage}
		\caption{Grayscale image.  }
	\end{subfigure}
	\begin{subfigure}{\textwidth}
		\centering
		\begin{minipage}{0.45\textwidth}
			\centering
			\includegraphics[width=\textwidth]{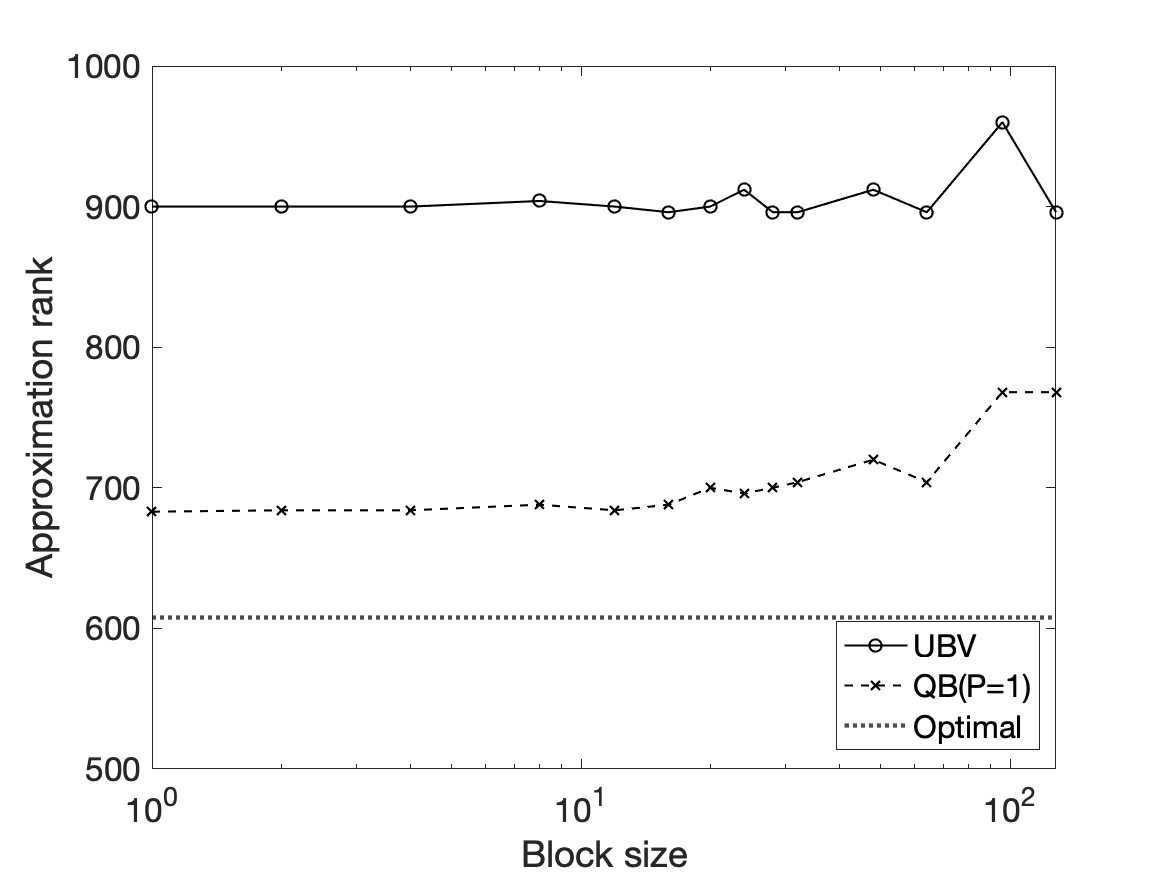}
		\end{minipage}
		\begin{minipage}{0.45\textwidth}
			\centering
			\includegraphics[width=\textwidth]{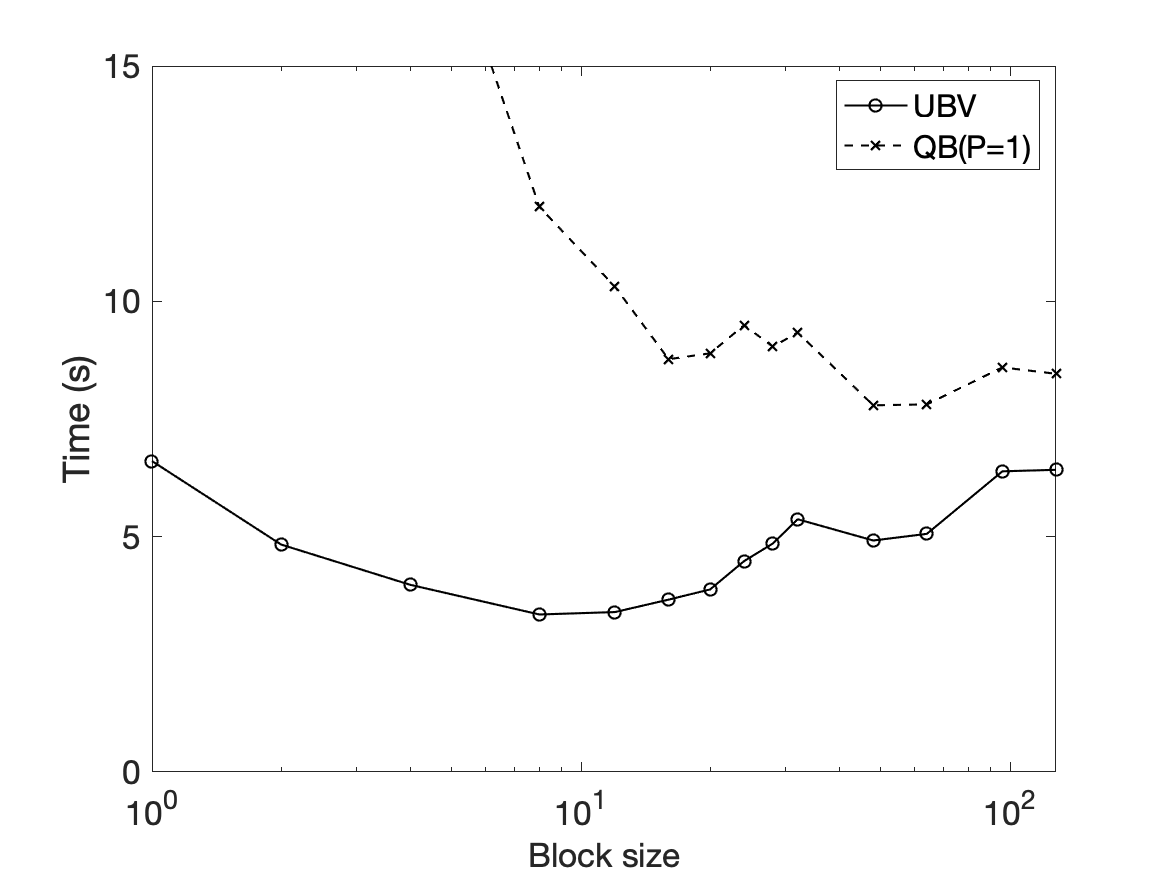}
		\end{minipage}
		\caption{SuiteSparse matrix \texttt{lp\_cre\_b}.  }
	\end{subfigure}
	\caption{Effect of block size the time and number of iterations required for convergence. }
	\label{fig:blockTest}
\end{figure}

\subsection{Stopping tolerance}
In our final set of experiments we examined the effect of choosing a stopping tolerance $\tau_{\text{stop}}$ smaller than the desired approximation error tolerance $\tau_{\text{err}}$, with the conjecture that doing so would allow \texttt{randUBV} to attain significantly better compression rates. We used \texttt{randQB\_EI} with $p=0,1,2$ as a reference for comparison. 

The procedure went as follows: in the first step, each sketching algorithm was run until the Frobenius norm approximation error dropped below a set tolerance $\tau_{\text{stop}}$. In the second step, the SVD of $\mat{B}$ was then computed and truncated as $\mat{B}_r = \mat{U}_r\mat{\Sigma}_r\mat{V}_r\ts$ to the smallest rank such that $\|\mat{A} - \mat{B}_r\|_F \leq \tau_{\text{err}} \|\mat{A}\|_F$, and the singular vectors of $\mat{A}$ computed as $\mat{U}\mat{U}_r$ and $\mat{V}\mat{V}_r$ (or as $\mat{Q}\mat{U}_r$ for \texttt{randQB\_EI}). The time required for each of these two stages was recorded using \texttt{tic} and \texttt{toc}. 

\begin{figure}[H]
	\centering 
	\begin{tabular}{|c|c|c|c|c|c|c|c|}\hline
		Method & $\tau_{\text{stop}}$ & $t_\text{fac}$ & $t_\text{svd}$ & $t_\text{total}$ & $k$ & $r$ \\\hline
		SVD & --  & -- & 13.52 & 13.52 & -- & 388\\\hline
		UBV & 0.1 & 0.68 & 0.08 & 0.76 & 520 & 439\\ \hline
		UBV & 0.09  & 0.87 & 0.11 & 0.98 & 600 & 392\\ \hline
		QB(P=0) & 0.1  & 1.22 & 0.21 & 1.44 & 700 & 663 \\ \hline
		QB(P=1) & 0.1 & 1.12 & 0.09 & 1.22 & 440 & 420\\ \hline
		QB(P=2) & 0.1  & 1.55 & 0.08 & 1.63 & 420 & 398\\ \hline
	\end{tabular}
	\caption{Results for image data with approximation tolerance $\tau_{\text{err}} = 0.1$. }
	\label{fig:photo}
\end{figure}

\subsubsection{Image data}

For the image data, we ran all algorithms to a relative error of $\tau_{\text{stop}} = \tau_{\text{err}} = 0.1$ with block size $b=20$, and for \texttt{randUBV} additionally considered the stricter stopping tolerance $\tau_{\text{stop}} = 0.09$. 

Results are shown in Figure \ref{fig:photo}, with all time reported in seconds. There, $t_\text{fac}$ is the time required for the QB or UBV factorization, $t_\text{svd}$ is the time required to compute the SVD of $\mat{B}$ and the new singular vectors of $\mat{A}$, and $t_\text{total} = t_\text{fac} + t_\text{svd}$. Finally, $k$ is the rank at which the algorithm was terminated, and $r$ the rank to which $\mat{B}$ was truncated. The first line represents the time required to directly compute the SVD of $\mat{A}$ and the optimal truncation rank. 

We observe that \texttt{randUBV} ran faster than \texttt{randQB\_EI} regardless of the value of the power parameter $p$. Even though it required more iterations to converge than \texttt{randQB\_EI} with $p=1$ or $p=2$, it required fewer matrix-vector products with $\mat{A}$ or $\mat{A}\ts$ per iteration. Furthermore, running \texttt{randUBV} to a stopping tolerance that was slighly smaller than the truncation tolerance took somewhat longer but resulted in nearly optimal compression, even superior to subspace iteration with $p=2$. 

\subsubsection{SuiteSparse data}

For the matrix \texttt{lp\_cre\_b} from the SuiteSparse collection, we ran two trials. In the first, we ran all algorithms to the rather modest relative error of $\tau_{\text{stop}} =\tau_{\text{err}} = 0.5$, and for \texttt{randUBV} considered the stricter stopping tolerance $\tau_{\text{stop}} = 0.45$. In the second, we ran the algorithms to the stricter relative error of $\tau_{\text{stop}} =\tau_{\text{err}} = 0.15$, and for \texttt{randUBV} additionally considered $\tau_{\text{stop}} = 0.14$. We used block size $b=50$ for both trials. 

\begin{figure}[H]
	\centering 
	\begin{tabular}{|c|c|c|c|c|c|c|c|}\hline
		Method & $\tau_{\text{stop}}$ & $t_\text{fac}$ & $t_\text{svd}$ & $t_\text{total}$ & $k$ & $r$ \\\hline
		SVD & --  & -- & -- & -- & -- & 608\\\hline
		UBV & 0.5  & 4.69 & 0.93 & 5.62 & 900 & 747\\ \hline
		UBV & 0.45  & 5.68  & 0.99 & 6.67 & 1050 & 627\\ \hline
		QB(P=0) & 0.5 & 8.33 & 8.32 & 16.66 & 1150 & 1123 \\ \hline
		QB(P=1) & 0.5 & 5.16 & 3.69 & 8.85 & 700 & 676\\ \hline
		QB(P=2) & 0.5 & 6.54 & 3.21 & 9.75 & 650 & 627\\ \hline
	\end{tabular}
	\caption{Results for \texttt{lp\_cre\_b} with approximation tolerance $\tau_{\text{err}} = 0.5$.}
	\label{fig:netlib1}
\end{figure}

\begin{figure}[H]
	\centering 
	\begin{tabular}{|c|c|c|c|c|c|c|c|}\hline
		Method & $\tau_{\text{stop}}$  & $t_\text{fac}$ & $t_\text{svd}$ & $t_\text{total}$ & $k$ & $r$ \\\hline
		SVD & --  & -- & -- & -- & -- & 2082\\\hline
		UBV & 0.15  & 21.28 & 12.15 & 33.43 & 2600 & 2293\\ \hline
		UBV & 0.14  & 24.09 & 13.88 & 37.98 & 2700 & 2150\\ \hline
		QB(P=0) & 0.15 & 72.36 & 63.25 & 135.61 & 3600 & 3505 \\ \hline
		QB(P=1) & 0.15 & 38.05 & 22.91 & 60.97 & 2150 & 2147\\ \hline
		QB(P=2) & 0.15 & 48.00 & 21.59 & 69.59 & 2100 & 2100\\ \hline
	\end{tabular}
	\caption{Results for \texttt{lp\_cre\_b} with approximation tolerance $\tau_{\text{err}} = 0.15$.}
	\label{fig:netlib2}
\end{figure}

Results are shown in Figures \ref{fig:netlib1} and \ref{fig:netlib2}, with all time reported in seconds. Due to the size of the matrix $\mat{A}$, we did not attempt to compute its SVD directly but instead found the optimal truncation rank using the precomputed singular values available online \cite{suiteSparse}. 

Once again, \texttt{randUBV} ran faster than its subspace-iteration-based counterpart, and using a slightly smaller stopping tolerance $\tau_{\text{stop}}$ improved the compression ratio without significantly increasing the runtime. The iteration $k$ at which \texttt{randUBV} terminated was significantly smaller than it was for \texttt{randQB\_EI} with $p=0$, but significantly larger than for \texttt{randQB\_EI} with $p=1$ or $p=2$ (perhaps in part due to the singular value clusters). 

It should be noted that the matrix $\mat{A}$ in question is quite sparse with only about $0.03\%$ of its entries nonzero, and fairly skinny with $m\approx 8n$. It is therefore worth exploring whether \texttt{randQB\_EI} might save time on reorthogonalization costs if performed on $\mat{A}\ts$ instead. We re-ran the experiment for $\tau_{\text{err}} = 0.15$, and found that while the factorization time $t_\text{fac}$ did not change much, the second step $t_\text{svd}$ took around twice as long due to the matrix $\mat{B}$ being $k\times m$ rather than $k\times n$.

\section{Conclusions}
\label{sec:conclusions}

We have proposed a randomized algorithm \texttt{randUBV} that takes a matrix $\mat{A}$ and uses block Lanczos bidiagonalization to find an approximation of the form $\mat{UBV}\ts$, where $\mat{U}$ and $\mat{V}$ each have orthonormal columns in exact arithmetic and $\mat{B}$ is a block bidiagonal matrix. For square matrices it costs approximately the same per iteration as \texttt{randQB}-type methods run with power parameter $p=0$ while having better convergence properties. On rectangular matrices, it exploits one-sided reorthognalization to run faster without much degrading the accuracy of the error estimator. Numerical experiments suggest that \texttt{randUBV} is generally competitive with existing \texttt{randUBV}-type methods, at least as long as the problem is not so large that it becomes important to minimize the number of passes over $\mat{A}$. 

A few avenues for future exploration are suggested. First and most importantly, roundoff error allows block Lanczos methods to handle repeated singular values, which they would be unable to do in exact arithmetic. This fact has been known for decades, but we are not currently aware of any rigorous convergence bounds that account for finite precision. Second, reinflation or any more general method for adaptively changing the block size $b$ will make the span of $\mat{V}$ a sum of Krylov spaces of different dimensions. We are not aware of any convergence results that cover this more general setting. 

It is also worth exploring just how much the block Lanczos method benefits from oversampling. We have observed that running \texttt{randUBV} for a few more iterations than necessary can result in near-optimal compression, but it would be worthwhile to turn the convergence results of e.g. \cite{yuan2018superlinear} into practical guidance on how many more iterations are necessary. 


Finally, the behavior of $\mat{U}$ when using one-sided reorthogonalization merits further study. We generally found that when using a larger stopping tolerance $\tau$ the columns of $\mat{U}$ remained closer to orthonormal. It would be highly desirable to obtain a rigorous result establishing that one-sided reorthogonalization is safe as long as only a rough approximation is required, but we leave this goal for a future work.

MATLAB code is available at \url{https://github.com/erhallma/randUBV}, including our main algorithm \texttt{randUBV} as well as code used to reproduce the figures used in this paper.


\section*{Acknowledgments}
The author would like to thank Ilse Ipsen and Arvind Saibaba for their helpful comments on an earlier draft of this paper. 

\bibliographystyle{siamplain}
\bibliography{references}
\end{document}